\newtheorem{theorem}{Theorem}[section]
\newtheorem{conjecture}[theorem]{Conjecture}
\newtheorem{corollary}[theorem]{Corollary}
\newtheorem{lemma}[theorem]{Lemma}
\newtheorem{remark}[theorem]{Remark}
\begin{document}
\title[Uniqueness for equations with critical exponential growth]{Uniqueness of positive solutions to elliptic equations with the critical exponential growth on the unit disc and its applications}
\author{Lu Chen, Guozhen Lu, Ying Xue and Maochun Zhu}
\address{School of Mathematics and Statistics, Beijing Institute of Technology, Beijing 100081, P. R. China}
\email{chenlu5818804@163.com}
\address{Department of Mathematics\\
University of Connecticut\\
Storrs, CT 06269, USA}
\email{guozhen.lu@uconn.edu}
\address{School of mathematical sciences\\
Jiangsu University\\
Zhenjiang, 212013, P. R. China\\}
\email{xueying6816@126.com}
\address{School of mathematical sciences\\
Jiangsu University\\
Zhenjiang, 212013, P. R. China\\}
\email{zhumaochun2006@126.com}

\thanks{The first author's research was partly supported by the National Natural Science Foundation of China (No. 12271027). The second author's research was supported partly by the Simons Foundation. The
fourth author's research was supported by National Natural Science Foundation of China (12071185). }

\begin{abstract}
In the past  few decades, uniqueness of positive solutions to elliptic equations with polynomial growth has been extensively studied. However, the corresponding problems associated with the elliptic equation with critical exponential growth
given by the Trudinger-Moser inequalities still remains open. For this kind of equations, the classic non-degeneracy method based on the Pohozaev identity and the study of the linearized equation do not seem to  work. In this paper, we will solve this uniqueness problem. More precisely, we obtain the uniqueness of positive solutions to the following equations:
\begin{equation*}
\begin{cases}
-\Delta u =\lambda ue^{u^2},\quad\quad & x\in B_1\subset \mathbb{R}^2,\\
u>0, & x\in B_1,\ \\
u=0,\quad\quad &x\in \partial B_1,
\end{cases}
\end{equation*}
where $ 0<\lambda<\lambda_1(B_1)$ and $\lambda_1(B_1)$ denotes the first eigenvalue of the operator $-\Delta$ with the Dirichlet boundary in unit disk. Our method relies on delicate and difficult analysis of  radial solutions to the above equation and careful asymptotic expansion of solutions near the boundary. This uniqueness result will shed some light on solving the conjecture that maximizers of the Trudinger-Moser inequality on the unit disc are unique.
  Furthermore, based on this uniqueness result, we develop a new strategy to establish the quantization property of elliptic equations with the critical exponential growth in the balls of hyperbolic spaces, and obtain the multiplicity and non-existence of positive critical points for super-critical Trudinger-Moser functional. Our method for the quantization property and non-existence of the critical points avoids using the complicated blow-up analysis  used in the literature. This method can also be applied to study the similar problems in balls of high dimensional Euclidean space $\mathbb{R}^n$ or hyperbolic spaces provided the uniqueness for the corresponding quasilinear elliptic equations with the critical exponential growth is established.
\end{abstract}

\maketitle {\small {\bf Keywords:} Uniqueness; Critical points and  Multiplicity; Trudinger-Moser exponential growth; Quantization analysis.
 \\

{\bf 2010 MSC.} Primary 35A02; 35B33; 35J60.  }

\section{\bigskip Introduction}
The main content of this paper is concerned with the uniqueness theorem for positive solutions of elliptic equations with the Trudinger-Moser growth and  its application in quantization analysis, multiplicity and non-existence of critical points of Trudinger-Moser functional in  balls of Euclidean spaces or hyperbolic spaces. Uniqueness problems and quantization analysis for elliptic equations have attracted much attention due to its importance in applications to PDEs and geometric analysis. Let us first present a brief history of the main results in this direction.
\medskip

In the past few decades, much attention has been paid to the uniqueness of solutions to elliptic equations with the nonlinearity $f$ of  polynomial growth:
\begin{equation}\label{op1}
\begin{cases}
-\Delta u=f(u),\quad\quad & x\in  B_1,\\
u>0,\quad\quad & x\in  B_1, \\
u=0,\quad\quad & x \in \partial B_1,
\end{cases}
\end{equation}
where $B_1$ is the unit ball in $\mathbb{R}^n$ $(n\geq 2)$.  By the classical moving-plane method, one knows that every solution of problem \eqref{op1} must be radial decreasing. Hence the problem \eqref{op1} can be reduced to the following radial equation:

\begin{equation}\label{op2}
		\begin{cases}
			-(r^{n-1}u')'=r^{n-1}f(u),\quad\quad &r\in(0,1), \\
			u>0, \quad\quad & r\in(0,1), \\
			u'(0)= u(1)=0.
		\end{cases}
	\end{equation}
Now, we recall some important results for the nonlinearity $f(u)=\lambda u+u^{p}$, $1<p<+\infty$ and $\lambda\geq 0$ and $n\ge 3$. When $\lambda=0$, $1<p<\frac{n+2}{n-2}$, Gidas, Ni and Nirenberg \cite{GNN} proved that problem \eqref{op1} admits only one radial solution through homogeneity. (By the Pohozaev identity, problem \eqref{op1} does not admit any solutions for $\lambda=0$ and $p\geq \frac{n+2}{n-2}$). When $\lambda>0$ and $1<p\leq \frac{n}{n-2}$, the uniqueness of positive solution was obtained by Ni and Nussbaum \cite{NN}. Kwong and Li \cite{KL} extended this uniqueness result to the case $\lambda>0$ and $1<p<\frac{n+2}{n-2}$, while the uniqueness for the critical case ($p=\frac{n+2}{n-2}$, the Brezis-Nirenberg problem \cite{BN}) was proved by Srikanth \cite{Sri}. In the aforementioned papers, the main idea in proving the uniqueness result is to show that the corresponding linearized equation has only one zero in $(0,1)$. This is the so-called  non-degeneracy method. Subsequently, Adimurthi \cite{Adimurthi} provided an elementary proof for the uniqueness when $\lambda\geq 0$ and $1<p\leq \frac{n+2}{n-2}$ through exploiting a generalized Pohozaev variational identity. For $\lambda>0$ and $p>\frac{n+2}{n-2}$, the uniqueness cannot be expected to hold. Indeed, it has been shown by Budd and Norbury \cite{BuNo} that there exists $\lambda>0$ such that probelm \eqref{op2} has infinitely many solutions when $3\leq n\leq 9 $.

 \medskip
 It should be noted that the critical growth means that the
nonlinearity cannot exceed the polynomial of degree $\frac{n+2}{n-2}$ by the  Sobolev imbedding theorem. While in the case $n=2$, we say that
$f\left(  s\right)  $ has \textit{critical exponential growth} at infinity if
there exists $\alpha_{0}>0$ such that \
\begin{equation}
\underset{t\rightarrow\infty}{\lim}\frac{f\left(  t\right)  }{\exp\left(
\alpha t^{2}\right)  }=%
\genfrac{\{}{.}{0pt}{}{0\text{, \ \ for }\alpha>\alpha_{0}}{+\infty\text{, for
}\alpha<\alpha_{0}},
\label{exponential critical}%
\end{equation}which is given by the famous Trudinger-Moser inequality (\cite{Mo, Tru}):  \begin{equation}\label{Tru}
\sup_{\|\nabla u\|_2^2\leq 1, u\in W^{1,2}_0(\Omega)}\int_{\Omega}\exp(\alpha|u|^{2})dx<\infty, \text{iff } \alpha\leq4\pi,
\end{equation}
where $\Omega\subseteq \mathbb{R}^2$ is a  bounded domain and $W^{1,2}_0(\Omega)$ denotes the usual Sobolev space.
\vskip0.1cm

 Thus, the maximal growth in the case $n=2$ is of exponential type. A natural but nontrivial problem arises:  Can the uniqueness result still hold if we replace the nonlinearity of equation \eqref{op1} with exponential growth, and in particular for $f(t)=\lambda t e^{t^{\mu}}$, with $0<\mu\leq2$ and $\lambda>0$?
\vskip0.1cm

When $\mu=1$, by using a new identity from the beautiful analysis developed by Atkinson and Peletier \cite{AP},  Adimurthi \cite{Adimurthi1} obtained the uniqueness for the subcritical case $f(t)=te^{t}$. Tang \cite{Tang} further showed that the uniqueness is still true for  more general nonlinearity of the type $f(t)=\lambda g(t)e^t$, where $g(t)$ is a polynomial and satisfies  certain conditions.  However, this method  cannot be extended for the case $\mu>1$. Recently, under the assumption that $\|u\|_\infty$ is large enough, Adimurthi, Karthik A and Giacomoni \cite{Adimurthi2} proved the uniqueness of positive radial solution under suitable growth conditions on the nonlinearity. We remark that the nonlinearity in \cite{Adimurthi2} includes the subcritical case $1<\mu<2$ and partially critical case such as $f(t)=t^pe^{t^2+\beta t}$ with $\beta>0$. Their results do not include the standard critical case $\lambda te^{t^2}$.

The first purpose of this paper is to solve the uniqueness problem for the elliptic equation with the standard critical exponential growth:

\begin{equation}\label{exp1}
\begin{cases}
-\Delta u =\lambda ue^{u^2},\quad\quad & x\in B_1,\\
u>0,\quad\quad & x\in B_1,\\
u=0,\quad\quad &x\in \partial B_1.
\end{cases}
\end{equation}

Adimurthi in \cite{Adimurthi0} proved that the above equation \eqref{exp1}  has a positive solution if and only if $\lambda\in (0, \lambda_1(B_1))$. By the method of moving-plane, we know that every positive solution of equation \eqref{exp1} must be radially decreasing and hence satisfies
\begin{equation}\label{adeq1}
		\begin{cases}
			-(ru')'=r\lambda ue^{u^2}, \quad\quad&  r\in(0,1),  \\
			u>0,\quad\quad&  r\in(0,1),\\
			u'(0)=0, u(1)=0.
		\end{cases}
	\end{equation}
Our first main result can be stated as follows:
\begin{theorem}\label{thm1}
For any $\lambda>0$, problem \eqref{adeq1} admits at most one solution.
\end{theorem}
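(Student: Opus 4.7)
\medskip
\noindent\textbf{Proof proposal.} My plan is to argue via a shooting method. For each $\gamma>0$, the initial value problem
$$-(ru')'=r\,u e^{u^{2}},\qquad u(0)=\gamma,\ u'(0)=0,$$
admits a unique smooth solution $u(\cdot;\gamma)$; let $\rho(\gamma)\in(0,\infty)$ be its first zero. By the dilation $x\mapsto \rho(\gamma)\,x$, the problem \eqref{adeq1} with parameter $\lambda$ has a solution with $u(0)=\gamma$ if and only if $\lambda=\rho(\gamma)^{2}$. Hence Theorem \ref{thm1} is equivalent to the injectivity of $\gamma\mapsto \rho(\gamma)^{2}$ on $(0,\infty)$, and I would in fact aim for the stronger statement that $\rho(\gamma)$ is strictly decreasing.

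\medskip
\noindent I would first dispatch the two endpoints. As $\gamma\to 0^{+}$, linearisation around the principal Dirichlet eigenfunction on the disc gives $\rho(\gamma)^{2}\to \lambda_{1}(B_{1})$, with negative derivative coming from the positive coefficient of $\int|u_{0}|^{4}$ in the bifurcation expansion (the next correction from $u e^{u^2}\approx u + u^3+\cdots$ has the right sign). As $\gamma\to\infty$, a standard concentration/bubble argument for Trudinger--Moser critical problems yields $\rho(\gamma)\to 0^{+}$, so $\rho$ is a continuous surjection of $(0,\infty)$ onto $(0,\lambda_{1}(B_{1})^{1/2})$, matching the existence range of Adimurthi.

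\medskip
\noindent The heart of the proof is a two-term asymptotic expansion of $u(r;\gamma)$ \emph{near the boundary} $r=\rho(\gamma)$, where $u$ is small so the exponential nonlinearity is nearly linear. Switching to the Emden--Fowler variable $t=\log(\rho(\gamma)/r)$ with $v(t)=u(r)$ gives
$$v''(t)=\rho(\gamma)^{2}\,e^{-2t}\,v e^{v^{2}},$$
and I would match an outer, almost-linear layer (where $v\simeq A(\gamma)\,t$) with the inner concentration layer around $r=0$ driven by the bubble profile $\phi(s)=\log\bigl(1/(1+s^{2}/8)^{2}\bigr)$ after the rescaling $s=r e^{\gamma^{2}/2}/\gamma$. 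Tracking the matching constants through the bulk leads to a precise expansion of the form
$$\rho(\gamma)^{2}= \frac{C_{1}}{\gamma^{2}}\,e^{-\gamma^{2}}\Bigl(1+\tfrac{C_{2}}{\gamma^{2}}+o(\gamma^{-2})\Bigr),$$
whose strict monotonicity for large $\gamma$ is manifest from the explicit form. The same boundary expansion feeds an integral identity of Pohozaev type, adapted to the critical exponential nonlinearity by incorporating the leading logarithmic profile of $u$ near $r=\rho(\gamma)$: testing the equation against a carefully chosen multiplier built from $u$, $ru'$, and $\log(\rho/r)$ produces a quantity $H(\gamma)$ that one can read off from the two-term expansion above and show is strictly monotone in $\gamma$. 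Any pair of distinct solutions with common $\lambda$ would produce two values of $\gamma$ with identical $\rho(\gamma)$, hence identical $H(\gamma)$, which contradicts the strict monotonicity.

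\medskip
\noindent The main obstacle is precisely the boundary expansion at \emph{intermediate} $\gamma$. The critical growth $u e^{u^{2}}$ is exactly on the borderline of the Trudinger--Moser embedding, so the bubble in the inner layer depends subtly on the matching data and naive Pohozaev multipliers lose the decisive sign (this is why the classical non-degeneracy route via the linearised equation, flagged as ineffective in the abstract, fails here). Overcoming this requires pushing the inner/outer matching one order beyond the leading term and showing that the cancellations produce a definite sign uniformly in $\gamma\in(0,\infty)$; this careful asymptotic bookkeeping is the technical core of the argument.
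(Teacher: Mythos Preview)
Your proposal has a genuine gap: you acknowledge that the boundary expansion at intermediate $\gamma$ is ``the main obstacle,'' but you do not supply an argument there. The endpoint asymptotics ($\gamma\to 0^{+}$ and $\gamma\to\infty$) and the two-term expansion of $\rho(\gamma)^{2}$ for large $\gamma$ only give strict monotonicity on neighbourhoods of $0$ and $\infty$; they say nothing about the bulk of $(0,\infty)$. The quantity $H(\gamma)$, built from an unspecified Pohozaev multiplier ``from $u$, $ru'$, and $\log(\rho/r)$,'' is not defined concretely enough to assess, and no mechanism is offered for why it should be strictly monotone for \emph{every} $\gamma$ --- in effect you have restated the problem rather than solved it. Since injectivity of $\gamma\mapsto\rho(\gamma)$ is literally equivalent to the theorem, any successful proof along these lines must close precisely this intermediate range, and nothing in your sketch does so.

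The paper's route is entirely different and sidesteps the shooting parameter $\gamma$ altogether. It takes two putative solutions $u\neq v$ of \eqref{adeq1} with the \emph{same} $\lambda$, reduces (via an intersection argument, Lemmas~\ref{Lem2.1}--\ref{lem2.5}) to a pair meeting exactly once with $v/u$ strictly increasing, and then (Lemma~\ref{Lem3.1}) produces $t\in(0,1)$ for which $w=u-tv$ satisfies $w\ge 0$, $w'\le 0$ on $[0,1]$ and $w(1)=w'(1)=0$. From the ODE this forces $u^{(k)}(1)=t\,v^{(k)}(1)$ for $k\le 4$ and yields explicit closed formulas for $u^{(5)}(1)-t v^{(5)}(1)$ and $u^{(6)}(1)-t v^{(6)}(1)$ in terms of $u'(1),v'(1)$. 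A local Pohozaev identity on $B_r$ is then Taylor-expanded at $r=1$; comparing the $(r-1)^{5}$ coefficients on both sides produces a numerical contradiction ($\tfrac{1}{2}+\tfrac{84}{120}\ne 2$). The argument is uniform in $\lambda$ and needs no matched asymptotics, bubble profiles, or blow-up analysis --- the ``boundary expansion'' is a finite Taylor expansion at the fixed point $r=1$, not an inner/outer layer match.
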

\begin{remark}
Classical approaches   based on the non-degeneracy method has been successfully applied to solve the uniqueness problem if $f(t)$ has the subcritical or critical polynomial growth.
However, this method fails to deal with the exponential nonlinearity $f(t)$ like $te^{t^{\mu}}$, it is mainly because in this case we have $\lim\limits_{t\rightarrow +\infty}\frac{f'(t)}{f(t)}=+\infty$, which is significantly different from that in the case  of polynomial nonlinearity. To handle the critical exponential case, we will establish an elementary, but deep and powerful  result (see Lemma \ref{Lem3.1}). More precisely, we can show that there exists some $0<t<1$ such that $w=u-tv$ satisfies  $w>0$, $w'<0 \text{ in } (0,1)$ and $w(1)=w'(1)=0$, provided there exist two solutions $u$ and $v$.  Then we can deduce a contradiction through the local Pohozaev-type variational identity and a careful asymptotic expansion of $u$ and $v$ at the boundary.
\end{remark}

It was shown by Carleson-Chang
\cite{Carleson} that if $\alpha=4\pi$ and $\Omega$ is a ball, then the supremum in \eqref{Tru} can be achieved by a radial function $u_0$ satisfying
the equation \begin{equation*}
\begin{cases}
-\Delta u =\lambda_0 ue^{u^2},\quad\quad & x\in B_1,\\
u>0, & x\in B_1,\ \\
u=0,\quad\quad &x\in \partial B_1,
\end{cases}
\end{equation*}
for some $0<\lambda_0<\lambda_1(B_1)$. Hence, uniqueness result for ODE equation \eqref{adeq1} will be an important step towards solving the following uniqueness conjecture about the maximizers of the Trudinger-Moser inequality.

\begin{conjecture}
If $\alpha=4\pi$ and $\Omega$ is a ball, then the maximizers of the Trudinger-Moser inequality \eqref{Tru} are unique.

\end{conjecture}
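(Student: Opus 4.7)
I argue by contradiction along the lines sketched in the Remark. Assume \eqref{adeq1} has two distinct positive solutions $u$ and $v$. The ODE is regular on $(0,1]$, and the singular Cauchy problem at $r=0$ with data $u'(0)=0$, $u(0)=a$ has a unique solution for each $a>0$, so $u(0)\neq v(0)$; the analogous Cauchy uniqueness at $r=1$ (with data $u(1)=0$, $u'(1)=b$) forces $u'(1)\neq v'(1)$. Both derivatives are strictly negative, so after relabeling we may assume $0<|u'(1)|<|v'(1)|$ and set
\[t:=\frac{u'(1)}{v'(1)}\in(0,1),\qquad w:=u-tv,\]
which satisfies $w(1)=w'(1)=0$ and
\[-(rw')'=r\lambda\bigl[ue^{u^2}-tv\,e^{v^2}\bigr]\quad\text{on }(0,1).\]

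The core of the proof is the claim of Lemma \ref{Lem3.1}, that for this choice of $t$ one has $w>0$ and $w'<0$ on all of $(0,1)$. The local picture at $r=1$ comes from the perturbative expansion $u(r)=u'(1)\,\phi_1(r)+u'(1)^3\,\phi_3(r)+O(u'(1)^5)$, where $\phi_1$ solves the linearization $\phi_1''+\phi_1'/r+\lambda\phi_1=0$ with $\phi_1(1)=0$, $\phi_1'(1)=1$, and $\phi_3$ captures the leading cubic correction driven by $\lambda u^3$ (with $\phi_3(1)=\phi_3'(1)=0$); a direct Taylor computation yields $\phi_3(r)\sim -\tfrac{\lambda}{20}(r-1)^5$ near $r=1$. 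Since the linear pieces of $u$ and $tv$ cancel identically by the very choice of $t$, one obtains
\[w(r)=u'(1)\bigl(u'(1)^2-v'(1)^2\bigr)\phi_3(r)+O\bigl((r-1)^7\bigr),\]
whose coefficient is strictly positive since $u'(1)<0$ and $u'(1)^2-v'(1)^2<0$, giving $w>0$ and $w'<0$ on an interval $(1-\delta,1)$. Propagating this sign information to the whole of $(0,1)$ is the main obstacle. If $r_0\in(0,1)$ were the largest interior zero of $w$, then $u(r_0)=tv(r_0)$ would force
\[ue^{u^2}-tv\,e^{v^2}\big|_{r_0}=tv(r_0)\bigl(e^{t^2v(r_0)^2}-e^{v(r_0)^2}\bigr)<0,\]
which via the ODE gives $w''(r_0)+w'(r_0)/r_0>0$ and hence excludes $w'(r_0)=0$. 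Ruling out the remaining case $w'(r_0)<0$ requires a delicate Sturm-type comparison that exploits the strict super-convexity of $s\mapsto se^{s^2}$ and the inductive sign information already available on $(r_0,1)$; this is where the specifically exponential nonlinearity plays its decisive role, and where the classical non-degeneracy/polynomial arguments fail.

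Once the key claim is established, the contradiction is closed by a local Pohozaev-type variational identity applied to the pair $(u,tv)$. The point is that the strengthened boundary conditions $w(1)=w'(1)=0$ kill boundary contributions that would otherwise be present for a generic difference $u-v$: for example, multiplying the equation for $w$ by $2rw'$ on $(R,1)$ and integrating, the boundary term $(Rw'(R))^2$ can be compared to the asymptotic expansion of $w$ derived above, while the bulk integral is controlled by the signs $w>0$, $w'<0$ and the decomposition
\[ue^{u^2}-tve^{v^2}=w\,e^{u^2}+tv\bigl(e^{u^2}-e^{v^2}\bigr).\]
Combining this with the scalar Pohozaev identities $(u'(1))^2=2\lambda\int_0^1 r(e^{u^2}-1)\,dr$ and $(v'(1))^2=2\lambda\int_0^1 r(e^{v^2}-1)\,dr$ (which together with $u'(1)^2=t^2v'(1)^2$ pin down $\int_0^1 r(e^{u^2}-t^2e^{v^2})\,dr=\tfrac{1-t^2}{2}$), and letting $R\to 1$ to exploit the precise asymptotic behavior of $w$, yields an inequality strictly inconsistent with the identity produced by the Pohozaev argument. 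This contradiction establishes Theorem \ref{thm1}.
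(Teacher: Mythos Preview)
The statement you were asked to address is a \emph{Conjecture}, and the paper does not prove it. The paper explicitly presents it as open, saying only that Theorem~\ref{thm1} ``will be an important step towards solving'' it. Your proposal is in fact an attempt at Theorem~\ref{thm1} (uniqueness of the ODE~\eqref{adeq1} for each fixed $\lambda$), not the Conjecture (uniqueness of Trudinger--Moser maximizers). These are genuinely different: a maximizer solves~\eqref{adeq1} for \emph{some} Lagrange multiplier $\lambda_0$, but the Conjecture also requires ruling out that two maximizers could sit at different values of $\lambda$. Nothing in your write-up addresses that.

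Even read as a proof of Theorem~\ref{thm1}, there is a real gap. You fix $t=u'(1)/v'(1)$ from the outset and then need $w=u-tv>0$, $w'<0$ on all of $(0,1)$. You establish this only near $r=1$ and then concede that the global propagation ``requires a delicate Sturm-type comparison'' which you do not supply; your interior-zero argument is also mis-aimed (if $w'<0$ on $(r_0,1)$ then $w(r_0)>w(1)=0$, so the largest failure point is a zero of $w'$, not of $w$, and at such a point one only gets $u(r_0)>tv(r_0)$, which does not by itself force a sign for $ue^{u^2}-tve^{v^2}$). The paper avoids this obstacle entirely: it first replaces one solution, via Lemmas~\ref{Lem2.1}--\ref{lem2.5}, by a solution that intersects the other exactly once with strictly monotone ratio $u_1/u_2$, then invokes the trichotomy of Lemma~\ref{Lem3.1}; monotonicity of the ratio kills case~(b), $u(0)>v(0)$ kills case~(a), leaving only case~(c), which is precisely your configuration $w(1)=w'(1)=0$, $w\ge0$, $w'\le0$. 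Finally, your closing contradiction is only sketched, whereas the paper carries the Taylor expansion at $r=1$ through the $(r-1)^5$ coefficient on both sides of the Pohozaev identity~\eqref{ad21} and obtains the explicit numerical contradiction $\tfrac{1}{2}+\tfrac{84}{120}=2$.
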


From \cite{Adimurthi0}, we know that equation \eqref{exp1} admits a positive ground-state solution $u_{\lambda}$ with its functional energy $$I_\lambda(u)=\frac{1}{2}\int_{B_1}|\nabla u|^2dx-\frac{\lambda}{2}\int_{B_1}e^{u^2}dx<2\pi,$$ for any $0<\lambda<\lambda_1(B_1)$. In \cite{dMR},
del Pino, Musso and Ruf proved that if $\Omega$ in problem \eqref{exp1} is not a simply connected domain, then one can construct a family of positive solutions $u_\lambda$ satisfying the equation
\begin{equation}\label{adeq2}
\begin{cases}
-\Delta u =\lambda ue^{u^2},\quad\quad & x\in \Omega, \\
u>0,\quad\quad & x\in \Omega,\\
0<\lambda<\lambda_1(\Omega),\\
u=0,\quad\quad &x\in \partial \Omega,
\end{cases}
\end{equation}
such that $$J_\lambda(u_\lambda)=\frac{1}{2}\int_{\Omega}|\nabla u_\lambda|^2dx-\frac{\lambda}{2}\int_{\Omega}e^{u_\lambda^2}dx\rightarrow4\pi,$$ as $\lambda\rightarrow 0$. From their proof, one may conjecture if the domain of problem \eqref{exp1} is a simply connected or even a convex domain, there is no positive solution $u_\lambda$ to \eqref{adeq2} such that the above property holds. When $\Omega$ is a ball, we can give a positive answer to this problem. Indeed, through our uniqueness result, we see that each positive solution of \eqref{exp1} must be a ground-state solution. Consequently, this implies that one cannot construct a family of solutions $u_\lambda$ such that its functional energy exceeds $2\pi$.
More precisely, it can be stated as follows:

 \begin{corollary}
 Given any   family of positive solutions $u_\lambda$ satisfying the equation
\begin{equation}\label{ball}
\begin{cases}
-\Delta u =\lambda ue^{u^2},\quad\quad & x\in B_1, \\
u>0,\quad\quad & x\in B_1,\\
0<\lambda<\lambda_1(B_1),\\
u=0,\quad\quad &x\in \partial B_1,
\end{cases}
\end{equation}
there must hold $$I_\lambda(u_\lambda)=\frac{1}{2}\int_{B_1}|\nabla u_\lambda|^2dx-\frac{\lambda}{2}\int_{B_1}e^{u_\lambda^2}dx<2\pi.$$
\end{corollary}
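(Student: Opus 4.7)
The plan is to reduce the corollary to a direct combination of Theorem~\ref{thm1} with the ground-state existence and energy estimate of Adimurthi~\cite{Adimurthi0}, both of which are already in hand at this point of the paper. Fix $\lambda \in (0,\lambda_1(B_1))$ and let $u_\lambda$ be any positive solution of \eqref{ball}. My first step is to apply the moving-plane method of Gidas--Ni--Nirenberg to the symmetric problem on $B_1$: this forces $u_\lambda$ to be radial and strictly radially decreasing, so that $u_\lambda$ in fact solves the radial ODE system \eqref{adeq1}. Theorem~\ref{thm1} then asserts that \eqref{adeq1} admits at most one positive solution, so $u_\lambda$ is uniquely determined by $\lambda$.

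Second, I invoke the existence result recalled from \cite{Adimurthi0}: for every $\lambda \in (0,\lambda_1(B_1))$ there exists a positive ground-state solution $u^*_\lambda$ of \eqref{exp1} whose energy satisfies the strict bound $I_\lambda(u^*_\lambda) < 2\pi$. This bound is precisely the one quoted in the excerpt above Corollary~1.3 and rests on Adimurthi's mountain-pass construction together with the Carleson--Chang type analysis that keeps the minimax level strictly below the compactness threshold $2\pi = 4\pi/2$ coming from the Trudinger--Moser inequality \eqref{Tru}.

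Combining the two steps, $u^*_\lambda$ is itself a positive solution of \eqref{ball}, hence (by the same moving-plane reduction) a positive solution of \eqref{adeq1}. By the uniqueness assertion of Theorem~\ref{thm1} applied for the given $\lambda$, we must have $u_\lambda \equiv u^*_\lambda$, and therefore $I_\lambda(u_\lambda) = I_\lambda(u^*_\lambda) < 2\pi$, which is exactly the claimed bound. The main and only real obstacle has already been overcome in Theorem~\ref{thm1}; the remaining ingredients—radial symmetry via moving planes and the energy bound $<2\pi$ for the ground state—are classical and directly available from the literature cited, so no new estimates are needed to close the argument.
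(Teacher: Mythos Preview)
Your proposal is correct and follows exactly the route the paper indicates: the paper states (just above the corollary) that ``through our uniqueness result, we see that each positive solution of \eqref{exp1} must be a ground-state solution,'' and then invokes the bound $I_\lambda<2\pi$ for the ground state from \cite{Adimurthi0}. Your write-up simply makes explicit the moving-plane reduction to \eqref{adeq1} and the identification $u_\lambda\equiv u^*_\lambda$ via Theorem~\ref{thm1}, which is precisely the intended argument.
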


\medskip

Slightly modifying the proof of Theorem \ref{thm1}, we can also obtain the uniqueness of positive solutions for elliptic equations with the critical exponential growth in the ball of hyperbolic space. Using this uniqueness result, we can establish the quantization property of positive solutions for corresponding elliptic equations. Quantization property for elliptic equations with the critical exponential growth  can be dated back to the work of Druet in \cite{D}, which can be stated as follows:
\vskip0.1cm

Let $\{u_k\}_k$ be a sequence of positive solutions of problem \eqref{adeq2} with $\lambda$ replaced by $\lambda_k$. Suppose that $u_k$ is bounded in $W^{1,2}_0(\Omega)$ and $u_k$ blows up. Then after passing to a subsequence, one has $\lambda_k\rightarrow \lambda_0$, $u_k\rightharpoonup u_0$ and there exists some integer $N$ such that
$$\lim\limits_{k\rightarrow +\infty}\|\nabla u_k\|_2^2= \|\nabla u_0\|_2^2+4\pi N.$$
When $\Omega$ is a disk, the positive solution $u_k$ must be radially symmetric through standard moving-plane method. (see e.g. \cite{ChenLi}, \cite{GNN}.) By using the ODE technique, Malchiodi and Martinazzi \cite{MM} proved that $u_0=0$ and $N=1$. Furthermore, they showed that the functional $I_\lambda$ under the constraint $\int_{B_1}|\nabla u|^2dx=\gamma$ does not admit any positive critical point for $\gamma$ sufficiently large. More recently, Druet and Thizy \cite{DT} showed in more general domain $\Omega$ that $u_0=0$ and $N$ is equal to the number of concentration points by the complicated blow-up analysis technique combining with a comparison theorem.
\medskip

In this paper, we will also utilize the uniqueness result to
develop a new strategy to establish the quantization property for elliptic equations with the critical exponential growth in the hyperbolic space. Our second main result reads as follows:

\begin{theorem}\label{adthm2}
Assume that $u_\lambda$ is a family of solutions satisfying
\begin{equation}\label{adeq3}
\begin{cases}
-\Delta_{\mathbb{H}} u =\lambda u e^{u^2},\quad\quad & x\in B_\mathbb{H}(0,R), \\
u>0,\quad\quad & x\in B_\mathbb{H}(0,R),\\
0<\lambda<\lambda_1(B_\mathbb{H}(0,R)),\\
u=0,\quad\quad &x\in \partial B_\mathbb{H}(0,R),
\end{cases}
\end{equation}
where $\mathbb{H}$ denotes the standard hyperbolic space $\mathbb{H}=(B_1, dV_\mathbb{H})$, $dV_\mathbb{H}=(\frac{2}{1-|x|^2}))^2dx$, $B_\mathbb{H}(0,R)$ denotes the ball in hyperbolic space centered at the origin with the geodesic radius $R$, $-\Delta_{\mathbb{H}}$ denotes Laplace-Beltrami operator in $\mathbb{H}$, $\lambda_1(B_\mathbb{H}(0,R))$ is the first eigenvalue of the operator $-\Delta_{\mathbb{H}}$ with the Dirichlet boundary in $B_\mathbb{H}(0,R)$. Then $u_\lambda$ is radially symmetric and unique. Furthermore, when $\lambda\rightarrow \lambda_0$, we have
\medskip

(i) if $\lambda_0=0$, then $u_\lambda$ blows up at the origin, and $|\nabla_{\mathbb{H}}u_\lambda|^2dV_{\mathbb{H}}\rightharpoonup 4\pi \delta_0$, $\lambda u_\lambda e^{u_\lambda^2}\rightharpoonup 4\pi \delta_0$,
\medskip

(ii) if $\lambda_0\in (0,\lambda_1(B_\mathbb{H}(0,R)))$, then $u_\lambda\rightarrow u_0$ in $C^{2}(B_\mathbb{H}(0,R)))$ and $u_0$ is a positive radial solution of the equation
\begin{equation}
\begin{cases}
-\Delta_{\mathbb{H}} u=\lambda_0 ue^{u^2},\quad\quad & x\in B_\mathbb{H}(0,R), \\
u>0,\quad\quad & x\in B_\mathbb{H}(0,R), \\
u=0,\quad\quad &x\in \partial B_\mathbb{H}(0,R).
\end{cases}
\end{equation}
\vskip 0.1cm

(iii) if $\lambda_0=\lambda_1(B_\mathbb{H}(0,R))$, then $u_\lambda\rightarrow 0$ in $C^{2}(B_\mathbb{H}(0,R))$.
\end{theorem}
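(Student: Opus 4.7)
The plan is to first establish the radial symmetry and uniqueness of positive solutions of \eqref{adeq3}, and then handle each of the three limiting regimes in $\lambda_0$ separately. For radial symmetry I would apply the moving-plane method adapted to geodesic balls in the Poincar\'e ball model, using the invariance of $-\Delta_{\mathbb{H}}$ under the hyperbolic isometries fixing the origin; this forces every positive solution to be radial and strictly decreasing in the geodesic radius. Uniqueness for each $\lambda\in(0,\lambda_1(B_{\mathbb{H}}(0,R)))$ then follows from the hyperbolic counterpart of Theorem \ref{thm1}, obtained by rerunning the construction of the auxiliary function $w=u-tv$, the local Pohozaev-type identity, and the boundary asymptotic expansion, with the Laplace-Beltrami operator and the hyperbolic volume weight replacing their Euclidean analogues. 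Consequently the family $\{u_\lambda\}$ is well-defined as a single-valued function of $\lambda$, and the hyperbolic analog of the ground-state energy bound $I_\lambda(u_\lambda)<2\pi$ from the preceding corollary holds uniformly.

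Cases (ii) and (iii) are then standard in the presence of this uniform bound. In case (ii), $\|\nabla_{\mathbb{H}}u_\lambda\|_{L^2}^2<4\pi$ combined with the Trudinger-Moser inequality on the hyperbolic ball gives $\int e^{qu_\lambda^2}\, dV_{\mathbb{H}}$ bounded for some $q>1$; Moser iteration yields a uniform $L^\infty$-bound on any compact $\lambda$-subinterval, and elliptic regularity upgrades this to $C^2$-precompactness. The limit $u_0$ must be nontrivial, for otherwise $u_\lambda/\|u_\lambda\|_\infty$ would converge to a positive Dirichlet eigenfunction of $-\Delta_{\mathbb{H}}$ at eigenvalue $\lambda_0<\lambda_1(B_{\mathbb{H}}(0,R))$, a contradiction. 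In case (iii), testing the equation against $u_\lambda$ gives $\int|\nabla_\mathbb{H}u_\lambda|^2\, dV_\mathbb{H}=\lambda\int u_\lambda^2 e^{u_\lambda^2}\, dV_\mathbb{H}$, and Poincar\'e together with the strict inequality $e^{u^2}>1$ wherever $u>0$ shows that any $L^2$-subsequential limit must vanish; $C^2$-convergence to zero then follows by bootstrap.

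Case (i) is the substantive part. First one shows that $\alpha_\lambda:=u_\lambda(0)=\|u_\lambda\|_\infty\to+\infty$ as $\lambda\to 0$, since otherwise the compactness argument of case (ii) would produce a nontrivial solution at $\lambda_0=0$, contradicting $-\Delta_{\mathbb{H}}u=0$ with zero Dirichlet data. By radial monotonicity the only admissible concentration point is the origin. Rather than performing the Druet-Thizy blow-up analysis, I would exploit uniqueness: since $u_\lambda$ is the unique ground state, one can compare it, via a radial ODE argument of the type used for Theorem \ref{thm1}, with the explicit inner profile $v_\lambda(y)=2\alpha_\lambda(u_\lambda(\rho_\lambda y)-\alpha_\lambda)$ on the rescaled variable, where $\rho_\lambda$ is chosen so that $\lambda\alpha_\lambda^2\rho_\lambda^2 e^{\alpha_\lambda^2}=2$. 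The inner profile converges in $C^2_{\mathrm{loc}}$ to the standard radial Liouville bubble carrying total mass $4\pi$; the uniform energy bound $I_\lambda(u_\lambda)<2\pi$ rules out bubble towers or secondary concentrations; and a careful outer expansion, adapted from the boundary analysis in Theorem \ref{thm1}, controls the neck region. Together these three ingredients deliver $\|\nabla_{\mathbb{H}}u_\lambda\|_{L^2}^2\to 4\pi$ and the weak concentrations $|\nabla_{\mathbb{H}}u_\lambda|^2\, dV_\mathbb{H}\rightharpoonup 4\pi\delta_0$ and $\lambda u_\lambda e^{u_\lambda^2}\, dV_\mathbb{H}\rightharpoonup 4\pi\delta_0$. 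The hardest point, and where uniqueness is decisive, is the quantitative matching between the inner bubble and the outer decay that rules out any residual mass loss in the neck; this is precisely the step that in \cite{MM,DT} requires the full weight of blow-up analysis, and that uniqueness here replaces by a direct radial ODE comparison.
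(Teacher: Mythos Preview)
Your overall architecture (radial symmetry by moving planes, uniqueness via the hyperbolic adaptation of Theorem~\ref{thm1}, hence $u_\lambda$ is the ground state with $I_\lambda(u_\lambda)<2\pi$) matches the paper. The divergence is in how the three limits are extracted from this information.

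\medskip
\textbf{Case (i).} Your plan here is essentially the Malchiodi--Martinazzi/Druet--Thizy blow-up scheme: rescale to an inner Liouville bubble, control the neck, rule out secondary concentration. This works, but it is precisely the machinery the paper advertises it is \emph{avoiding}. The paper's argument uses no rescaling and no profile at all. From $I_\lambda(u_\lambda)<2\pi$ and the Nehari identity $\|\nabla_{\mathbb H}u_\lambda\|_2^2=\lambda\int u_\lambda^2 e^{u_\lambda^2}$, a crude truncation $\{|u_\lambda|>M\}$ versus $\{|u_\lambda|\le M\}$ shows $\lambda\int e^{u_\lambda^2}\to 0$, whence $\limsup\|\nabla_{\mathbb H}u_\lambda\|_2^2=\limsup 2I_\lambda(u_\lambda)\le 4\pi$. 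The matching lower bound is by contradiction: if the limit were $<4\pi$, the Trudinger--Moser inequality would make $\int u_\lambda^2 e^{u_\lambda^2}$ bounded, contradicting $\lambda\to 0$ with $\|\nabla_{\mathbb H}u_\lambda\|_2^2$ bounded below. Concentration at the origin is again by contradiction via a local Trudinger--Moser bound plus elliptic regularity. So the role of uniqueness in the paper is not to set up an ODE comparison with the bubble, as you suggest, but simply to guarantee the energy ceiling $I_\lambda<2\pi$; everything else is soft.

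\medskip
\textbf{Case (ii).} Your opening claim ``$\|\nabla_{\mathbb H}u_\lambda\|_{L^2}^2<4\pi$'' is not available a priori: $I_\lambda<2\pi$ only gives $\|\nabla_{\mathbb H}u_\lambda\|_2^2<4\pi+\lambda\int(e^{u_\lambda^2}-1)$, and the second term need not vanish when $\lambda_0>0$. The paper does not assume this. It first shows the weak limit $u_0$ solves the limit equation and is nonzero (else one would get $\|\nabla_{\mathbb H}u_\lambda\|_2^2\to 2\lim I_\lambda<4\pi$ and then, via Trudinger--Moser, $\lim I_\lambda=0$, a contradiction), and then upgrades weak to strong convergence using the Lions-type concentration--compactness for Trudinger--Moser: since $u_0\neq 0$, one has $\lim\|\nabla_{\mathbb H}u_\lambda\|_2^2(1-\|\nabla_{\mathbb H}v_0\|_2^2)<4\pi$ for $v_0=u_0/\lim\|\nabla_{\mathbb H}u_\lambda\|_2$, which yields uniform $L^{p_0}$ bounds on $u_\lambda^2 e^{u_\lambda^2}$ for some $p_0>1$. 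Your Moser-iteration route would work \emph{after} this step, but not from the unjustified $4\pi$ bound.

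\medskip
\textbf{Case (iii).} Your sketch is in the right spirit but skips the key point: one must first rule out $\|\nabla_{\mathbb H}u_\lambda\|_2^2\to\infty$. The paper does this by noting that $I_\lambda<2\pi$ forces $\lambda\int(u_\lambda^2 e^{u_\lambda^2}-e^{u_\lambda^2}+1)$ bounded, which in turn bounds $\int(e^{u_\lambda^2}-1)$ and hence the Dirichlet energy; the weak limit then solves the equation at $\lambda_1$, and a Pohozaev (or eigenvalue) argument forces it to be zero.
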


\begin{remark}
The existence of ground state solution (consequently positive solution) for elliptic equation \eqref{adthm2} can be verified by following the same line in the  works, e.g. in \cite{Adimurthi0, CLZ1, CLZ2}.
\end{remark}

\begin{remark}
Our proofs of quantization result on the hyperbolic spaces are based on the uniqueness result of solutions to the corresponding equation \eqref{adeq3}. Once the uniqueness property is obtained, we can study the quantization properties of the least energy solutions instead of the positive solutions. Our method is very simple and we can avoid using the complicated blow-up analysis of ODEs. We stress that this method can be also applied to study the quantization result for high dimensional ball of $\mathbb{R}^n$ or general hyperbolic space $\mathbb{H}^n$, provided the uniqueness result for solutions to the corresponding equations is established. This will be carried out in our forthcoming work.
\end{remark}

\begin{remark}
Since the uniqueness result for problem \eqref{adeq3} holds for any fixed $\lambda$, hence we need not to choose a subsequence of $\{u_\lambda\}_{\lambda}$ to obtain quantization result. Furthermore, in our proofs for the quantization result, we get rid of the assumption that $u_\lambda$ is uniformly bounded in $W^{1,2}(B_\mathbb{H}(0,R))$, which was required in \cite{D, DT}, hence we can directly obtain the  non-existence and multiplicity of positive critical point for supercritical Trudinger-Moser functional in $B_\mathbb{H}(0,R)$.
\end{remark}

\begin{theorem}\label{adthm3}
There exists $\gamma^{*}>4\pi$ such that the
Trudinger-Moser functional $F(u)=\int_{B_\mathbb{H}(0,R)}(e^{u^2}-1)dV_{\mathbb{H}}$ under the constraint $\int_{B_\mathbb{H}(0,R)}|\nabla_{\mathbb{H}}u_\lambda|^2dV_{\mathbb{H}}=\gamma$ has at least two positive critical point for $\gamma \in (4\pi, \gamma^{*})$, at least one critical point for $\gamma=4\pi$ or $\gamma=\gamma^{*}$, no positive critical point for  $\gamma \in (\gamma^{*}, +\infty)$.
\end{theorem}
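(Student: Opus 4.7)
The plan is to translate the constrained critical-point problem into a problem about the parametrized family of solutions produced by Theorem \ref{adthm2}. Specifically, a positive critical point $u$ of $F$ subject to $\int_{B_\mathbb{H}(0,R)}|\nabla_\mathbb{H} u|^2\,dV_\mathbb{H} = \gamma$ must satisfy $-\Delta_\mathbb{H} u = \lambda\, u\, e^{u^2}$ for some Lagrange multiplier $\lambda$, and testing against $u$ and comparing with the Rayleigh quotient of $\lambda_1(B_\mathbb{H}(0,R))$ (using $e^{u^2}\geq 1$) forces $\lambda \in (0,\lambda_1(B_\mathbb{H}(0,R)))$. Theorem \ref{adthm2} then produces, for each such $\lambda$, a unique positive solution $u_\lambda$; standard elliptic regularity combined with uniqueness promotes $\lambda \mapsto u_\lambda$ to a continuous curve in $C^2$.

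I would then study the energy function $\Gamma(\lambda) := \int_{B_\mathbb{H}(0,R)}|\nabla_\mathbb{H} u_\lambda|^2\,dV_\mathbb{H}$, which is continuous on $(0,\lambda_1(B_\mathbb{H}(0,R)))$. Its boundary behavior comes directly from Theorem \ref{adthm2}: part (iii) gives $\Gamma(\lambda) \to 0$ as $\lambda \to \lambda_1(B_\mathbb{H}(0,R))^-$, while part (i) yields $\Gamma(\lambda) \to 4\pi$ as $\lambda \to 0^+$. Uniform $C^2$-estimates on compact subintervals of $(0, \lambda_1(B_\mathbb{H}(0,R)))$ show that $\Gamma$ is bounded, so $\gamma^* := \sup \Gamma$ is finite.

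The crux of the argument is to upgrade the convergence $\Gamma(\lambda) \to 4\pi$ to a one-sided statement: $\Gamma(\lambda) > 4\pi$ for all sufficiently small $\lambda > 0$. The plan is to carry out a sharp asymptotic expansion of the blowing-up radial solutions, decomposing $u_\lambda$ into the standard concentrating bubble profile centered at the origin (at a scale $\mu_\lambda \to 0$ determined by $M_\lambda := \|u_\lambda\|_\infty$) plus a correction $w_\lambda$ controlled through the linearized equation and the Dirichlet boundary condition, in the spirit of the Druet-Thizy analysis. Tracking the next-order contributions should yield an expansion of the form $\Gamma(\lambda) = 4\pi + c\, M_\lambda^{-2} + o(M_\lambda^{-2})$ with an explicitly positive constant $c$ tied to the hyperbolic geometry of $B_\mathbb{H}(0,R)$. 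This is where I expect the main technical obstacle; Theorem \ref{adthm2} shortens the task considerably, because uniqueness of $u_\lambda$ removes the need to extract subsequences and makes the expansion hold for the entire family rather than a blow-up limit. Granted this, $\gamma^* > 4\pi$.

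Once $\gamma^* > 4\pi$ is in place, the conclusion is a preimage count for $\Gamma$. Continuity together with $\gamma^* > 4\pi > 0$ and the two boundary limits forces $\gamma^*$ to be attained at some interior $\lambda^*$, producing one positive critical point at $\gamma = \gamma^*$. For $\gamma > \gamma^*$ there is no $\lambda$ with $\Gamma(\lambda) = \gamma$, hence no positive critical point. For $\gamma = 4\pi$, the intermediate value theorem applied to $\Gamma$ on $(\lambda^*, \lambda_1(B_\mathbb{H}(0,R)))$, where $\Gamma$ ranges between $\gamma^*$ and $0$, furnishes at least one solution. Finally, for $\gamma \in (4\pi, \gamma^*)$, the intermediate value theorem applied separately on $(0, \lambda^*)$ (where $\Gamma$ ranges between $4\pi$ and $\gamma^*$) and on $(\lambda^*, \lambda_1(B_\mathbb{H}(0,R)))$ (where $\Gamma$ ranges between $\gamma^*$ and $0$) produces two distinct values of $\lambda$ with $\Gamma(\lambda) = \gamma$, and the uniqueness clause of Theorem \ref{adthm2} guarantees that the associated $u_\lambda$ are genuinely distinct critical points.
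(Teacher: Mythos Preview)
Your overall architecture matches the paper's proof closely: reduce to the Euler--Lagrange equation with multiplier $\lambda\in(0,\lambda_1(B_\mathbb{H}(0,R)))$, invoke uniqueness to define the continuous map $\Gamma(\lambda)=\int_{B_\mathbb{H}(0,R)}|\nabla_\mathbb{H} u_\lambda|^2\,dV_\mathbb{H}$, use Theorem~\ref{adthm2} for the limits $\Gamma\to 4\pi$ and $\Gamma\to 0$ at the endpoints, set $\gamma^*=\sup\Gamma$, and read off the preimage counts by the intermediate value theorem. This part is essentially identical to the paper.

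The genuine divergence is in how you establish $\gamma^*>4\pi$. You propose a sharp blow-up expansion $\Gamma(\lambda)=4\pi+cM_\lambda^{-2}+o(M_\lambda^{-2})$ with $c>0$, in the style of Malchiodi--Martinazzi and Druet--Thizy. The paper deliberately avoids this route (cf.\ Remark~1.7) and instead uses a Struwe-type variational argument: it shows that the set $K_{4\pi}$ of extremals of the critical Trudinger--Moser inequality is compact (via Li's existence of extremals on compact manifolds), that the functional has a strict local maximum near $K_{4\pi}$, and that by continuity this local-max structure persists for constraint levels $\gamma$ slightly above $4\pi$. The resulting local maximizer is a positive critical point with $\Gamma$-value exceeding $4\pi$, whence $\gamma^*>4\pi$. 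Your approach, if the expansion with positive $c$ can be carried out in the hyperbolic setting, gives finer information (it localizes where $\Gamma>4\pi$ to small $\lambda$), but it is precisely the ``complicated blow-up analysis'' the paper's method is designed to bypass; the paper's argument is softer, needing only compactness of extremals and no quantitative asymptotics.
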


\begin{remark}
Exploiting the existence of the critical points of  the Trudinger-Moser functional $M(u)=\int_{\Omega}(e^{u^2}-1)dx$ under the constraint
$\int_{\Omega}|\nabla u|^2dx=\sigma$ for $\sigma>4\pi$ has been a challenging problem. By using a variational method and the monotonicity of the functional $M(u)$, Struwe \cite{ST} proved that there exits $\sigma^{*}>4\pi$ such that $M(u)$ has at least two positive critical points for almost $\gamma\in (4\pi,\sigma^{*})$. Later, Lamm, Robert and Struwe \cite{LRS} introduced the Trudinger-Moser flow and strengthened it to every $\gamma\in (4\pi,\sigma^{*})$. When $\Omega$ is a disk, Malchiodi and Martinazzi \cite{MM} applied refined blow-up analysis for radial critical point of $M$ to derive that $M(u)$ does not admit any positive critical point for $\sigma$ sufficiently large.
It is conjectured that if $\Omega$ is a simply connected domain, the above non-existence result still holds.
\end{remark}

Finally, we note that the following improved Trudinger-Moser inequality (see \cite{T}) still holds:
\begin{equation}\label{egen}\sup_{u\in W^{1,2}_0(B_1),\int_{B_1}(|\nabla u|^2-\lambda |u|^2)dx\leq 1} \int_{B_1}e^{4\pi u^2}dx<+\infty \end{equation}
if $\lambda<\lambda_1$, where $\lambda_1$ denotes the first eigenvalue of operator $-\Delta$ with the Dirichlet
boundary condition. Furthermore, it is proved in \cite{Yang}  that the improved Trudinger-Moser inequality \eqref{egen} admits an extremal function through the blow-up technique. Hence it is also interesting to consider the uniqueness for the extremal of \eqref{egen} and the problems of positive critical point for super-critical Trudinger-Moser functional $H(u)=\int_{B_1}(e^{u^2}-1)dx$ under the Dirichlet energy constraints $\int_{B_1}(|\nabla u|^2-\lambda |u|^2)dx=\beta$ for $\beta>4\pi$. In fact, slightly modifying the proofs of our Theorems \ref{thm1}, \ref{adthm2} and \ref{adthm3}, we can also obtain the uniqueness of the problem \eqref{adeq3}, the quantization results, multiplicity and non-existence for positive critical point of supercritical Trudinger-Moser functional. We only list these results without giving the detailed proof.
\begin{theorem}\label{thm5}
For any $\lambda< \lambda_1(B_1)=2\pi$. Let $u_\theta$ be a positive solutions of equation
\begin{equation}\label{quantization}
\begin{cases}
-\Delta u-\lambda u =\theta u e^{u^2},\quad\quad & x\in B_1, \\
u>0,\quad\quad & x\in B_1, \\
u=0,\quad\quad &x\in \partial B_1,\\
\end{cases}
\end{equation}
where $\theta$ satisfies $0<\theta<\lambda_1(B_1)-\lambda$. Then $u_\theta$ is radially decreasing and unique. Furthermore, when $\theta\rightarrow \theta_0$, there holds
\vskip0.1cm

(i) If $\theta_0=0$, then $u_\theta$ blows up at the origin, $\lambda u_\theta^2\exp(u_\theta^2)dx\rightharpoonup 4\pi \delta_0$ and $\|\nabla u_\theta\|_2^2\rightarrow 4\pi$ as $\theta\rightarrow0$.
\medskip

(ii) if $\theta_0\in (0,\lambda_1(B_1)-\lambda)$, then $u_\lambda\rightarrow u_0$ in $C^{2}(\bar{B_1})$ and $u_0$ is a positive radial solution of the equation
\begin{equation}
\begin{cases}
-\Delta u_0-\lambda u_0 =\theta_0 u_0e^{u_0^2}\quad\quad & x\in B_1, \\
u_0>0,\quad\quad & x\in B_1, \\
u_0=0, \quad\quad &x\in \partial B_1.
\end{cases}
\end{equation}
\medskip

(iii) if $\theta_0=\lambda_1(B_1)-\lambda$, then $u_\theta\rightarrow 0$ in $C^{2}(\bar{B_1})$.
\end{theorem}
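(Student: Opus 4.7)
The plan is to adapt the proofs of Theorems \ref{thm1}, \ref{adthm2} and \ref{adthm3} to the perturbed equation \eqref{quantization}. Since $\theta > 0$ and $\lambda < \lambda_1(B_1)$, the nonlinearity $g(u) = \lambda u + \theta u e^{u^2}$ is smooth and the Gidas-Ni-Nirenberg moving-plane method applies, so every positive solution $u_\theta$ is radially symmetric and strictly decreasing in $r = |x|$. The problem therefore reduces to the radial ODE $-(r u')' = r(\lambda u + \theta u e^{u^2})$ on $(0,1)$ with $u'(0) = u(1) = 0$.

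For the uniqueness claim, I would repeat the argument behind Theorem \ref{thm1}. Assuming $u \ne v$ are two positive radial solutions, the elementary but crucial comparison lemma used in the proof of Theorem \ref{thm1} produces $0 < t < 1$ such that $w := u - tv$ satisfies $w > 0$, $w' < 0$ on $(0,1)$, and $w(1) = w'(1) = 0$. This lemma only uses the shape of monotone radial profiles vanishing at $r = 1$, not the specific form of the nonlinearity, so it applies verbatim. The contradiction then comes from a local Pohozaev-type identity on $(r_0, 1)$ combined with a careful Taylor expansion of $u$ and $v$ at $r = 1$; letting $r_0 \to 0$ contradicts $w > 0$.

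For the quantization statements (i)--(iii), I would follow the template of Theorem \ref{adthm2}. Uniqueness at each $\theta$ eliminates the need to extract subsequences. Testing \eqref{quantization} against $u_n := u_{\theta_n}$ yields
\begin{equation*}
\int_{B_1} \bigl( |\nabla u_n|^2 - \lambda u_n^2 \bigr) dx = \theta_n \int_{B_1} u_n^2 e^{u_n^2} \, dx,
\end{equation*}
so the improved Trudinger-Moser inequality \eqref{egen} controls the Dirichlet energy uniformly in $n$. If $\theta_0 \in (0, \lambda_1(B_1) - \lambda)$, a uniform $L^\infty$ bound combined with elliptic regularity gives $u_n \to u_0$ in $C^2(\overline{B_1})$ with $u_0$ a positive radial solution of the limit equation. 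If $\theta_0 = \lambda_1(B_1) - \lambda$, testing against the first Dirichlet eigenfunction $\varphi_1$ of $-\Delta$ on $B_1$ yields $(\lambda_1(B_1) - \lambda - \theta_n) \int \varphi_1 u_n = \theta_n \int \varphi_1 u_n (e^{u_n^2} - 1)$, which forces $\|u_n\|_\infty \to 0$ and hence $u_n \to 0$ in $C^2$. If $\theta_0 = 0$, the limit cannot solve the linear eigenvalue problem because $\lambda < \lambda_1(B_1)$, so $u_n$ must blow up at the origin; the cut-off and concentration argument from Theorem \ref{adthm2} then yields $\|\nabla u_n\|_2^2 \to 4\pi$ together with $\theta_n u_n^2 e^{u_n^2} \, dx \rightharpoonup 4\pi \delta_0$. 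The multiplicity and non-existence of critical points of $H$ under the Dirichlet-type energy constraint then follow by tracking $\gamma(\theta) = \int_{B_1} (|\nabla u_\theta|^2 - \lambda u_\theta^2) \, dx$ along the uniqueness branch and replicating the argument of Theorem \ref{adthm3}.

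The principal obstacle I foresee is the boundary expansion needed to close the uniqueness proof. One must re-derive the first few Taylor coefficients of $u(r)$ at $r = 1$ from the modified ODE, and verify that the sign of the leading nonzero contribution to the boundary term in the local Pohozaev identity is not altered by the new linear term $\lambda u$. The computation is routine in spirit but delicate, and the argument hinges on this sign not being accidentally cancelled by the linear contribution.
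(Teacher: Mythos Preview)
Your overall plan matches the paper's, which explicitly says this theorem follows by ``slightly modifying the proofs'' of Theorems~\ref{thm1} and~\ref{adthm2} and gives no further details. However, two of your intermediate claims would not go through as written.

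\textbf{Energy bound.} You assert that the Nehari identity $\int_{B_1}(|\nabla u_n|^2-\lambda u_n^2)\,dx=\theta_n\int_{B_1}u_n^2e^{u_n^2}\,dx$ together with the improved Trudinger--Moser inequality~\eqref{egen} controls the Dirichlet energy uniformly. It does not: the right-hand side is not a priori bounded, and~\eqref{egen} gives information only once the left-hand side is already controlled. The paper's mechanism is different and essential: uniqueness forces $u_\theta$ to be the \emph{least-energy} critical point of
\[
I_\theta(u)=\tfrac12\int_{B_1}(|\nabla u|^2-\lambda u^2)\,dx-\tfrac{\theta}{2}\int_{B_1}(e^{u^2}-1)\,dx,
\]
and the Appendix argument (Nehari manifold plus a Moser test sequence) then yields $0<I_\theta(u_\theta)<2\pi$ uniformly in $\theta$. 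All three cases (i)--(iii) in the paper's proof of Theorem~\ref{adthm2} rest on this bound; without it your eigenfunction test for (iii) also stalls, since you have no a priori $L^\infty$ or $W^{1,2}$ control on $u_n$ to make the integrals converge.

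\textbf{Contradiction in the uniqueness step.} The phrase ``letting $r_0\to 0$ contradicts $w>0$'' does not describe the actual mechanism. In the paper's proof of Theorem~\ref{thm1} the local Pohozaev identity on $B_1\setminus B_r$ is expanded in powers of $(r-1)$ as $r\to 1^-$; the $(r-1)^4$ terms cancel and the contradiction is an explicit numerical mismatch of the $(r-1)^5$ coefficients (namely $\tfrac12+\tfrac{84}{120}\neq 2$). Your identified obstacle---recomputing these coefficients with the extra linear term $\lambda u$ and checking the mismatch survives---is exactly the right place to focus, but the contradiction lives entirely in the boundary expansion, not in any limit $r_0\to 0$.
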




\begin{theorem}\label{thm6}
There exists $\beta^{*}>4\pi$ such that the Trudinger-Moser functional $H(u)=\int_{B_1}\big(e^{u^2}-1\big)dx$ under the  constraints $\|\nabla u\|_2^2-\lambda\|u\|_2^2=\beta$ does not admit any positive critical point for $\beta>\beta^{*}$ and admit at least two positive critical points for $\beta\in (4\pi, \beta^{*})$.
\end{theorem}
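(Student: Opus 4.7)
The plan is to identify positive critical points of $H$ on the constraint manifold $\{u:\|\nabla u\|_2^2-\lambda\|u\|_2^2=\beta\}$ with positive solutions of the Euler--Lagrange equation \eqref{quantization}: writing the Lagrange-multiplier condition as $ue^{u^2}=\mu(-\Delta u-\lambda u)$ and setting $\theta=1/\mu$ produces precisely \eqref{quantization}. Testing \eqref{quantization} against the first Dirichlet eigenfunction of $-\Delta-\lambda$ and using $e^{u^2}>1$ on $\{u>0\}$ shows that a positive solution can only exist for $\theta\in(0,\lambda_1(B_1)-\lambda)$, and by Theorem \ref{thm5} each such $\theta$ admits a unique positive solution $u_\theta$. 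Positive critical points of $H$ at level $\beta$ are then in bijection with the values $\theta$ solving
\begin{equation*}
\beta(\theta):=\|\nabla u_\theta\|_2^2-\lambda\|u_\theta\|_2^2=\theta\int_{B_1}u_\theta^2 e^{u_\theta^2}\,dx=\beta,
\end{equation*}
so the theorem reduces to understanding the scalar map $\theta\mapsto\beta(\theta)$.

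Next I would show that $\beta(\cdot)$ is continuous on $(0,\lambda_1(B_1)-\lambda)$. Given $\theta_n\to\theta$ in this interval, elliptic regularity and compactness extract a $C^1(\overline{B_1})$ subsequential limit of $u_{\theta_n}$ that solves \eqref{quantization} at $\theta$ and is non-negative; uniqueness forces it to equal $u_\theta$, so the full sequence converges and $\beta$ is continuous. Theorem \ref{thm5}(i) and (iii) identify the one-sided limits $\beta(0^+)=4\pi$ and $\beta((\lambda_1(B_1)-\lambda)^-)=0$, and $\beta$ extends continuously to $[0,\lambda_1(B_1)-\lambda]$. Consequently $\beta^*:=\max_\theta\beta(\theta)$ is attained at some $\theta^*\in(0,\lambda_1(B_1)-\lambda)$ and satisfies $\beta^*\geq 4\pi$.

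The main obstacle is to strengthen $\beta^*\geq 4\pi$ to $\beta^*>4\pi$. The extremal $u^*$ of the improved Trudinger--Moser inequality \eqref{egen} from \cite{Yang} satisfies $\|\nabla u^*\|_2^2-\lambda\|u^*\|_2^2=1$, and its Euler--Lagrange equation rescaled by $v=\sqrt{4\pi}\,u^*$ becomes \eqref{quantization} with $\|\nabla v\|_2^2-\lambda\|v\|_2^2=4\pi$; uniqueness gives $v=u_{\theta_Y}$ at some interior $\theta_Y$, so the value $4\pi$ is attained along the branch. To rule out $\beta^*=4\pi$ I would refine the quantization of Theorem \ref{thm5}(i) into a higher-order asymptotic expansion $\beta(\theta)=4\pi+c\,\theta^{\alpha}+o(\theta^{\alpha})$ as $\theta\to 0^+$ and prove $c>0$; the positivity of $c$, reflecting that the $\lambda\|u_\theta\|_2^2$ correction does not cancel the Dirichlet-energy excess of the Trudinger--Moser bubble against the $\lambda$-background, is the delicate technical step and follows the lines of the blow-up analysis underlying Theorem \ref{thm5}(i).

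Once $\beta^*>4\pi$ is known, the conclusions follow from the intermediate value theorem. For any $\beta\in(4\pi,\beta^*)$, the restriction of the continuous function $\beta(\cdot)$ to $[0,\theta^*]$ has endpoint values $4\pi<\beta$ and $\beta^*>\beta$, hence some $\theta_1\in(0,\theta^*)$ satisfies $\beta(\theta_1)=\beta$; the restriction to $[\theta^*,\lambda_1(B_1)-\lambda]$ has endpoint values $\beta^*>\beta$ and $0<\beta$, producing some $\theta_2\in(\theta^*,\lambda_1(B_1)-\lambda)$ with $\beta(\theta_2)=\beta$. The associated $u_{\theta_1}$ and $u_{\theta_2}$ are two distinct positive critical points, since they correspond to different Lagrange multipliers and hence solve different equations. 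For $\beta>\beta^*$ the equation $\beta(\theta)=\beta$ has no solution, so no positive critical point exists.
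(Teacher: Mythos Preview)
Your overall architecture matches the paper's: identify positive constrained critical points with the unique solutions $u_\theta$ of \eqref{quantization}, study the scalar function $\beta(\theta)=\|\nabla u_\theta\|_2^2-\lambda\|u_\theta\|_2^2$, use Theorem \ref{thm5} to get continuity and the endpoint values $4\pi$ and $0$, set $\beta^*=\sup_\theta\beta(\theta)$, and conclude by the intermediate value theorem once $\beta^*>4\pi$ is known.

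The genuine divergence is in how you obtain the strict inequality $\beta^*>4\pi$. The paper (arguing for the analogous hyperbolic statement, Theorem \ref{adthm3}, and declaring Theorem \ref{thm6} to follow by the same modifications) does \emph{not} compute a higher-order expansion of $\beta(\theta)$ near $\theta=0$. Instead it invokes Struwe's variational argument: compactness of the extremal set $K_{4\pi}$ for the critical Trudinger--Moser inequality, together with local uniform continuity of the functional, yields a \emph{local maximum} of $H$ on the constraint set for every $\beta$ in a right-neighbourhood of $4\pi$. That local maximiser is a positive critical point with $\beta(\theta_*)=\beta>4\pi$ for some interior $\theta_*$, forcing $\beta^*>4\pi$ directly. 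This route is self-contained given the existing quantization and the Carleson--Chang/Li extremal theory, and it avoids any refined blow-up computation.

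Your proposed route---an expansion $\beta(\theta)=4\pi+c\,\theta^{\alpha}+o(\theta^{\alpha})$ with $c>0$---is in the spirit of Malchiodi--Martinazzi \cite{MM} for $\lambda=0$ and is plausible (the $-\lambda\|u_\theta\|_2^2$ correction is lower order since the bubble's $L^2$ mass vanishes), but you do not carry it out; you only assert that it ``follows the lines of the blow-up analysis underlying Theorem \ref{thm5}(i)''. That is precisely the delicate ODE analysis the paper's method is designed to bypass. Your earlier observation that the Yang extremal produces an interior $\theta_Y$ with $\beta(\theta_Y)=4\pi$ does not by itself exclude $\beta^*=4\pi$, as you correctly note. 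So as written, the proposal has a gap at the crucial step; replacing your expansion argument by the Struwe local-maximum construction would close it and align you with the paper.
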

\begin{remark}
When $\lambda=0$, the multiplicity and non-existence of positive critical point of super-critical Trudinger-Moser functional have been obtained by Malchiodi and Martinazzi in \cite{MM} through the accurate Dirichlet energy expansion formula obtained by the ODE technique.  It should be noted that even if $\lambda=0$, there is not any upper bound estimate for $\beta^{*}$. A famous conjecture proposed by Malchiodi is that $\beta^{*}$ should be equal to $8\pi$. Using the method  we developed in this paper, we can deduce that $\beta^{*}<8\pi$ if we consider the perturbed Trudinger-Moser functional $\tilde{H}(u)=\int_{B_1}(e^{u^2}-1-u^2)dx$ under the constraint $\int_{B_1}|\nabla u|^2dx=\beta$.
\end{remark}

\begin{theorem}\label{thm5}The
perturbed supercritical Trudinger-Moser functional $\tilde{H}(u)=\int_{B_1}(e^{u^2}-1-u^2)dx$ under the constraint $\int_{B_1}|\nabla u|^2dx=\beta$ does not admit any positive critical point if $\beta\geq 8\pi$.
\end{theorem}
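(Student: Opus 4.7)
A positive critical point $u$ of $\tilde{H}$ on the constraint manifold $\{\int_{B_1}|\nabla u|^2\,dx=\beta\}$ satisfies, by the Lagrange multiplier rule, $-\mu\Delta u=u(e^{u^2}-1)$ in $B_1$ with $u|_{\partial B_1}=0$; testing against $u$ shows $\mu\int_{B_1}|\nabla u|^2\,dx=\int_{B_1}u^2(e^{u^2}-1)\,dx>0$, so $\mu>0$, and with $\theta:=1/\mu>0$ the equation becomes
\[
-\Delta u=\theta\, u(e^{u^2}-1)\text{ in }B_1,\qquad u|_{\partial B_1}=0.
\]
By moving planes $u$ is radial and strictly decreasing. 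Following the strategy described at the end of Section~1, I would first adapt the argument of Theorem~\ref{thm1} to obtain uniqueness of the positive solution $u_\theta$ for every $\theta>0$ (the lower-order linear term $-\theta u$ does not spoil the boundary asymptotic expansion used there), producing a continuous one-parameter family $\theta\mapsto u_\theta$ and reducing the theorem to the claim $\beta(\theta):=\|\nabla u_\theta\|_2^2<8\pi$ for every $\theta>0$.

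\textbf{Three identities and a compact formula.} From the radial ODE $-(ru')'=r\theta u(e^{u^2}-1)$ I would derive three identities: the energy identity $\beta=\theta\int_{B_1}u^2(e^{u^2}-1)\,dx$ (test by $u$), the Pohozaev identity $\theta\int_{B_1}(e^{u^2}-1-u^2)\,dx=\pi(u'(1))^2$ (test by $x\cdot\nabla u$), and $-u'(1)=(\theta/2\pi)\int_{B_1}u(e^{u^2}-1)\,dx$ (integrate the ODE from $0$ to $1$). Writing $\mathcal{A}=\int_{B_1}u(e^{u^2}-1)\,dx$, $\mathcal{B}=\int_{B_1}u^2(e^{u^2}-1)\,dx$, $\mathcal{C}=\int_{B_1}(e^{u^2}-1-u^2)\,dx$, eliminating $u'(1)$ between the last two gives the key relation $\theta\mathcal{A}^2=4\pi\mathcal{C}$, and combined with the energy identity this yields the compact formula
\[
\beta=\frac{4\pi\,\mathcal{B}\mathcal{C}}{\mathcal{A}^2}.
\]

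\textbf{The bound $\beta<8\pi$: main obstacle.} The desired conclusion therefore reduces to the strict integral inequality $\mathcal{B}\mathcal{C}<2\mathcal{A}^2$, and this is where I expect the main difficulty. The pointwise bound $u^2(e^{u^2}-1)\ge 2(e^{u^2}-1-u^2)$, which follows from the convexity of $s\mapsto(s-2)e^s+s+2$ on $s\ge 0$, only gives the weaker integrated bound $\mathcal{B}\ge 2\mathcal{C}$; a direct Cauchy--Schwarz estimate gives $\mathcal{A}^2\le\mathcal{B}(\mathcal{C}+\|u\|_2^2)$, which runs in the wrong direction; and substituting the three identities back into $\mathcal{B}\mathcal{C}=2\mathcal{A}^2$ produces only a tautology. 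My plan is to obtain the required inequality via the quantization/asymptotic approach developed for Theorem~\ref{adthm2}, adapted to the perturbed nonlinearity: parameterizing the family by $u_0:=u_\theta(0)$, I would first verify the two endpoint behaviours $\beta(\theta)\to 4\pi$ as $\theta\to 0^+$ (blow-up at the origin, via the same quantization argument) and $\beta(\theta)\to 0$ as $\theta\to+\infty$ (by testing against the first Dirichlet eigenfunction of $-\Delta$), and then carry out an Atkinson--Peletier-type boundary expansion of $u_\theta$ in the spirit of Malchiodi--Martinazzi~\cite{MM}, expanding $\mathcal{A},\mathcal{B},\mathcal{C}$ and $\theta$ in powers of $u_0^{-2}$ and exploiting the cancellation produced by the subtracted $u^2$ term in $\tilde{H}$ to push the entire curve $\beta(\theta)$ strictly below $8\pi$. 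The hardest technical step will be controlling the error terms in this expansion uniformly across the intermediate regime where $u_0$ is neither very large nor very small.
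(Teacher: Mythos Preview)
Your reduction to the PDE and the three identities are correct, and the compact formula $\beta=4\pi\,\mathcal{B}\mathcal{C}/\mathcal{A}^2$ is valid. The gap is that you have discarded exactly the ingredient that finishes the proof in one line, and replaced it with a programme you yourself admit you cannot close.

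The paper never attacks the inequality $\mathcal{B}\mathcal{C}<2\mathcal{A}^2$. Instead, the real payoff of the uniqueness step is not merely a ``continuous one-parameter family'': it forces any positive solution $u_\theta$ to coincide with the \emph{least-energy} critical point of
\[
I_\theta(u)=\tfrac12\int_{B_1}|\nabla u|^2\,dx-\tfrac{\theta}{2}\int_{B_1}\bigl(e^{u^2}-1-u^2\bigr)\,dx,
\]
and the standard Nehari/mountain-pass argument combined with the Trudinger--Moser inequality (the Appendix computation, verbatim) gives the variational bound $I_\theta(u_\theta)<2\pi$. In your notation $I_\theta(u_\theta)=\tfrac{\theta}{2}(\mathcal{B}-\mathcal{C})$, and the pointwise inequality you already recorded, $u^2(e^{u^2}-1)\ge 2(e^{u^2}-1-u^2)$, i.e.\ $\mathcal{B}\ge 2\mathcal{C}$, yields $I_\theta(u_\theta)\ge\tfrac{\theta}{4}\mathcal{B}=\tfrac14\beta$. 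Combining the two gives $\beta<8\pi$ immediately. Neither the Pohozaev identity nor the integrated-ODE identity for $\mathcal{A}$ is needed.

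Your proposed alternative---an Atkinson--Peletier/Malchiodi--Martinazzi expansion carried uniformly across all values of $u_\theta(0)$---is far harder, and you concede that the intermediate regime is a genuine obstacle without offering a mechanism to overcome it. So the proposal is doubly deficient: it overlooks the elementary variational bound $I_\theta(u_\theta)<2\pi$ that uniqueness was designed to unlock, and the substitute it offers is left incomplete.
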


\section{The proof of Theorem \ref{thm1}}
In order to prove the uniqueness of positive solutions of  equation \eqref{adeq1}, we need some lemmas.
\begin{lemma}\label{Lem3.2}
Let $u$ and $v$ be non-negative $C^1$ functions on $[R_1,R_2]$, with $v\neq 0$ and $u>0$ in $(R_1,R_2)$. Assume that $v(R_i)=0$ and $u'(R_i)\ne 0$ if $u(R_i)=0$, where $i=1,2$. Then there exists a unique  $t\in(0,\infty)$ such that $w=u-tv\geq0$ in $[R_1,R_2]$ and $w(\xi)=0$ for some $\xi\in[R_1,R_2]$. Furthermore

(i) if $\xi\in(R_1,R_2)$ and $w$ is in $C^2$ in a neighbourhood of $\xi$, then $w(\xi)=w'(\xi)=0$, $w''(\xi)\geq 0$;\\

(ii) if $\xi=R_i$ and $u(R_i)=0$ for some $i\in\{1,2\}$, then $w'(\xi)=0$;\\

(iii) if $u(R_i)\ne 0$ and $v(R_i)=0$, then $\xi\ne R_i$, where $i=1,2$.
\end{lemma}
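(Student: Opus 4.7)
The plan is to take $t^*$ to be the largest scaling of $v$ that fits under $u$, namely
\[
t^* := \sup\{t \geq 0 : u(x) - tv(x) \geq 0 \text{ for all } x \in [R_1, R_2]\} = \inf\{u(x)/v(x) : x \in [R_1, R_2],\ v(x) > 0\},
\]
and verify that this $t^*$ has the claimed properties. Uniqueness is built into the definition as a supremum/infimum, so the real content of the lemma is that $t^* \in (0,\infty)$, that $w := u - t^* v$ actually touches zero, and the description of the touching point in (i)--(iii).

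Finiteness of $t^*$ is immediate because $v \not\equiv 0$: any $x_0$ with $v(x_0) > 0$ gives $t^* \leq u(x_0)/v(x_0)$. Positivity is the delicate analytic point, and this is where the boundary hypotheses enter. The ratio $u/v$ is continuous on the open set $\{v > 0\}$, so the only way its infimum could drop to zero is near a common zero of $u$ and $v$. Interior common zeros are impossible since $u > 0$ on $(R_1,R_2)$, and at an endpoint $R_i$ with $u(R_i) > 0$ one has $u/v \to +\infty$ because $v(R_i) = 0$. The only case that needs care is $u(R_i) = v(R_i) = 0$, where the hypothesis $u'(R_i) \neq 0$ together with the Taylor expansions $u(x) \sim u'(R_i)(x - R_i)$ and $v(x) = v'(R_i)(x - R_i) + o(x - R_i)$ shows that $u/v$ either has a positive limit $u'(R_i)/v'(R_i)$ when $v'(R_i) \neq 0$, or tends to $+\infty$ when $v'(R_i) = 0$. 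Combined with compactness, this yields $t^* > 0$ and simultaneously identifies the possible locations of the infimum.

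From this boundary analysis the touching point $\xi$ of $w = u - t^* v$ is either an interior point where $u/v$ attains its minimum (necessarily with $v(\xi) > 0$), or an endpoint $R_i$ with $u(R_i) = v(R_i) = 0$ and $v'(R_i) > 0$, in which case necessarily $t^* = u'(R_i)/v'(R_i)$. In both cases $w \geq 0$ on $[R_1,R_2]$ and $w(\xi) = 0$ by construction. The three sub-claims are then short calculus consequences. For (i), at an interior $\xi$ where $w$ is $C^2$, the inequality $w \geq 0$ with $w(\xi) = 0$ forces $\xi$ to be an interior local minimum, giving $w'(\xi) = 0$ and $w''(\xi) \geq 0$. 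For (ii), at $\xi = R_i$ with $u(R_i) = 0$, the identification $t^* = u'(R_i)/v'(R_i)$ yields $w'(R_i) = u'(R_i) - t^* v'(R_i) = 0$. For (iii), if $u(R_i) \neq 0$ and $v(R_i) = 0$, then $w(R_i) = u(R_i) > 0$, so $R_i$ cannot be a zero of $w$.

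The main obstacle is the boundary analysis that both establishes $t^* > 0$ and pins down exactly where the infimum can be attained, since this is precisely where the hypothesis $u'(R_i) \neq 0$ when $u(R_i) = 0$ is used. Once that is in hand, the remaining verification of (i)--(iii) is essentially bookkeeping.
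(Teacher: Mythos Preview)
Your approach is essentially the same as the paper's: both take $t$ to be the optimal scaling, the paper via $t=[\max v/u]^{-1}$ and you via $t^*=\inf u/v$, which are reciprocal formulations of the same quantity. The paper's version is marginally slicker because under the hypotheses $v/u$ extends to a continuous function on all of $[R_1,R_2]$ (by l'H\^opital at endpoints where $u$ vanishes), so the maximum exists directly by compactness and the touching point comes for free; your formulation requires the explicit boundary Taylor analysis you carried out, but reaches the same conclusion. One small caveat: your sentence ``uniqueness is built into the definition'' is too quick---if $u(R_i)=v(R_i)=0$ then $w(R_i)=0$ for every $t$, so smaller values of $t$ also produce a nonnegative $w$ with a zero; the paper's proof does not address this either, and in the applications only the construction of the specific $t$ and properties (i)--(iii) are used.
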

\begin{proof}This result was first essentially established by Rabinowitz \cite{Rabinowitz} (see also \cite{Adimurthi}). Since the proof is short, for the convenience of the reader, we prefer to give the proof.
 By the hypotheses, $v/u$ is a continuous function on $[R_1,R_2]$. Let $w=u-tv$, where
 \begin{equation*}\begin{split}
t=\max\bigg\{s:\frac{sv}{u}\leq1\bigg\}=\bigg[\max\frac{v}{u}\bigg]^{-1}.
\end{split}
\end{equation*}
Then $w\geq 0$ and there exists a $\xi\in[R_1,R_2]$ such that $w(\xi)=0$. If $\xi\in(R_i,R_2)$, then $\xi$ is a local minimizer and hence (i) holds. If $\xi=R_i$  for some $i\in\{1,2\}$, then by 1'H\^{o}pital's rule
	\begin{equation*}
		\begin{split}
			\frac{1}{t}=\lim_{r\to R_i}\frac{v(r)}{u(r)}=\frac{v'(R_i)}{u'(R_i)}
		\end{split}
	\end{equation*}
	and hence $w'(\xi)=0$. This proves (ii).
We prove (iii) by contradiction. If $\xi=R_i$, then we have $\frac{1}{t}=\frac{v(R_i)}{u(R_i)}=0$, this  impossible since $t\in(0,\infty)$. Hence (iii) holds.

    \end{proof}
  \begin{lemma}\label{Lem3.3}
	Let $u$ and $v$ be two solutions of problem \eqref{adeq1} and $r_1, r_2\in(0,\infty)$. For any $t>0$, set  $w=u-tv$. Then we have

(i) if $w(r_1)=w'(r_1)=0$ and $w''(r_1)\geq 0$, then $t\leq1$.\\

(ii) if $t=u(0)/v(0)$ and $w>0$ in $(0,r_2)$, then $t\leq1$.\\

In either case, $t=1$ implies that $u\equiv v$.
\end{lemma}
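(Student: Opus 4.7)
The plan is to read off $w''$ at the critical comparison point from the radial ODE, and then use the sign information provided by the hypotheses on $w$ together with the monotonicity of $s\mapsto e^{s^2}$ on $[0,\infty)$. In both parts, whenever $w$ vanishes at a point where $u=tv$, the nonlinearity collapses neatly to $u\,e^{u^2}-tv\,e^{v^2}=tv\bigl(e^{t^2v^2}-e^{v^2}\bigr)$, whose sign is governed entirely by whether $t\le 1$ or $t>1$.

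For part (i), since $r_1>0$ the equation is regular there. Writing $u''=-u'/r-\lambda u e^{u^2}$ and the analogous identity for $v$, subtracting $t$ times the latter from the former, using $w'(r_1)=0$ to kill the $u'/r$ contributions, and using $w(r_1)=0$, i.e. $u(r_1)=tv(r_1)$, to simplify the nonlinear remainder, I expect
\[
w''(r_1) \;=\; -\lambda\,tv(r_1)\bigl[e^{t^2 v(r_1)^2}-e^{v(r_1)^2}\bigr].
\]
The hypothesis $w''(r_1)\ge 0$ then forces $e^{t^2 v(r_1)^2}\le e^{v(r_1)^2}$, hence $t\le 1$, provided $v(r_1)>0$; the degenerate case $u(r_1)=v(r_1)=0$ either does not arise in the intended application or can be disposed of by the regular-point ODE-uniqueness argument used at the end.

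For part (ii) the critical point is $r=0$. Since $u$ and $v$ are smooth radial solutions, $u'(0)=v'(0)=0$, giving $w'(0)=0$, and the choice $t=u(0)/v(0)$ gives $w(0)=0$. Combined with $w>0$ on $(0,r_2)$, a Taylor expansion near $0$ forces $w''(0)\ge 0$. Smoothness and evenness of radial solutions at the origin give an expansion $u(r)=u(0)+a_2 r^2+O(r^4)$, and substituting into the radial equation at $r=0$ yields the well-known identity $u''(0)=-\tfrac{\lambda}{2}\,u(0)e^{u(0)^2}$ (and similarly for $v$). Subtracting,
\[
w''(0) \;=\; -\tfrac{\lambda}{2}\,tv(0)\bigl[e^{t^2 v(0)^2}-e^{v(0)^2}\bigr],
\]
and $w''(0)\ge 0$ again gives $t\le 1$.

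Finally, suppose $t=1$. In case (i), $u(r_1)=v(r_1)$ and $u'(r_1)=v'(r_1)$ at a regular point of a second-order ODE with smooth right-hand side, so standard uniqueness gives $u\equiv v$. In case (ii), $u(0)=v(0)$ with $u'(0)=v'(0)=0$; since the radial Cauchy problem at $r=0$ has a unique smooth solution for each prescribed $u(0)$ (by a shooting argument, or via the change of variable $\rho=r^2$ that removes the singularity), one again concludes $u\equiv v$. The only mild technical subtlety is the singular behaviour at $r=0$ in part (ii); apart from that, the whole argument is a direct algebraic reading-off of $w''$ from the ODE combined with the monotonicity of $s\mapsto e^{s^2}$.
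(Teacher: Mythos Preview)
Your argument for part~(i) and for the $t=1$ uniqueness step is essentially identical to the paper's. For part~(ii), however, you take a genuinely different and shorter route. The paper argues by contradiction: assuming $t>1$, it shows by a sign-change argument that there must exist a sequence $\eta_k\to 0$ with $w'(\eta_k)\ge 0$ and $w''(\eta_k)\ge 0$, then evaluates the ODE at $\eta_k$ (where $r>0$ so the equation is regular) and passes to the limit to force $e^{u(0)^2}\le e^{v(0)^2}$. You instead work directly at $r=0$: from $w(0)=w'(0)=0$ and $w>0$ on $(0,r_2)$ you read off $w''(0)\ge 0$ by Taylor, and then use the standard identity $u''(0)=-\tfrac{\lambda}{2}u(0)e^{u(0)^2}$ to compute $w''(0)$ explicitly. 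This is cleaner and avoids the sequence bookkeeping; the price is that you must invoke the smoothness of radial solutions at the origin and the formula for $u''(0)$, both of which are standard but which the paper's proof sidesteps entirely by never evaluating at $r=0$. Either way the conclusion is the same, and your hedge about the degenerate case $v(r_1)=0$ in part~(i) is harmless: the paper's proof tacitly uses $v(r_1)>0$ as well, which holds in every application since $r_1\in(0,1)$.
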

\begin{proof}
If $w(r_1)=w'(r_1)=0$, then $u(r_1)=tv(r_1)$. Combining this, \eqref{adeq1} and the assumption of (i), we have
\begin{equation}\label{eqin}
0\geq-\bigg(w''+\frac{1}{r}w'\bigg)(r_1)=\lambda tv(r_1)(e^{u^2(r_1)}-e^{v^2(r_1)}).\end{equation}
Hence  $e^{t^2v^2(r_1)}=e^{u^2(r_1)}\leq e^{v^2(r_1)}$, which implies that $t\leq1$. This proves (i).
\medskip

Now we prove (ii) by contradiction. Suppose that (ii) is not true. From the assumption that $w>0$ in $(0,r_2)$, we observe that for any $0<\eta<r_2$, there exists a $\xi\in(0,\eta)$ such that $w'(\xi)>0$. Now, we claim that there exists $\varepsilon_0>0$ such that $w'\geq0$ in $[0,\varepsilon_0]$. Indeed, if this is not true, then there exists a sequence $\xi_k\to0$ such that $w'(\xi_k)<0$. This implies that $w'$ changes sign infinitely often near zero and then we can find a sequence $\eta_k\to 0$ such that $w'(\eta_k)\geq0$, $w''(\eta_k)\geq0$. Similar as (\ref{eqin}), from the assumption that $w>0$ in $(0,r_2)$ and \eqref{adeq1}, we have
\begin{equation}\label{eqre}
0\geq-\bigg(w''+\frac{1}{r}w'\bigg)(\eta_k)>\lambda tv(\eta_k)(e^{u^2(\eta_k)}-e^{v^2(\eta_k)}).\end{equation}
Letting $\eta_k\to0$, we obtain $e^{u^2(0)}\leq e^{v^2(0)}$, and hence $t=u(0)/v(0)\leq 1$. This contradict with $t>1$, and the claim is proved.

 Since $t>1$, it follows from the above claim that $w'\geq0$ in $[0,\varepsilon_0]$ and $w'(0)=0$. Hence there exists a sequence $\eta_k\to 0$ such that $w''(\eta_k)\geq0$. Now by repeating the same argument at $\eta_k$ as (\ref{eqre}), we conclude that $t\leq 1$, which is a contradiction. This proves (ii).
\medskip

Finally, if $t=1$, then $u(p)= v(p)$ and $u'(p)= v'(p)$, where $p\in\{r_1,0\}$, which implies $u\equiv v$.
\end{proof}

\begin{lemma}\label{Lem3.1}
	     Let $u\neq v$ be two solutions of problem \eqref{adeq1}. Then there exist $t\in(0,1)$, $\xi\in(0,1)$ such that for $w=u-tv$ one of the following holds:\\

\text{(a)} $u(0)=tv(0)$,\\

\text{(b)} $w\geq0,w'\leq0\text{ in }[0,\xi],w(\xi)=w'(\xi)=0$,\\

\text{(c) } $w\geq0,w'\leq0\text{ in }[0,1],w'(1)=0$.
\end{lemma}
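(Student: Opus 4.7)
The plan is a case analysis on the sign of $u(0)-v(0)$. By the uniqueness theorem for the initial value problem of \eqref{adeq1} at $r=0$ (with the condition $u'(0)=0$), $u\neq v$ forces $u(0)\neq v(0)$. If $u(0)<v(0)$, I set $t=u(0)/v(0)\in(0,1)$ and pick any $\xi\in(0,1)$; then case (a) holds immediately.

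The substantive case is $u(0)>v(0)$, to which I would apply Lemma \ref{Lem3.2} on $[R_1,R_2]=[0,1]$: the hypotheses hold since $u,v>0$ on $(0,1)$, $v(1)=u(1)=0$, and $u'(1)\neq 0$. This yields $t>0$ and $\xi\in[0,1]$ with $w:=u-tv\geq 0$ on $[0,1]$ and $w(\xi)=0$. The possibility $\xi=0$ is ruled out: it would force $t=u(0)/v(0)>1$, but since $u\neq v$ implies $w\not\equiv 0$, combined with $w\geq 0$ and $w(0)=0$ this gives $w>0$ on some subinterval $(0,r_2)$, whence Lemma \ref{Lem3.3}(ii) yields $t\leq 1$, a contradiction. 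For $\xi\in(0,1)$, Lemma \ref{Lem3.2}(i) gives $w(\xi)=w'(\xi)=0$ and $w''(\xi)\geq 0$, so Lemma \ref{Lem3.3}(i) yields $t\leq 1$, with equality $t=1$ excluded by ODE uniqueness at $\xi$; hence $t\in(0,1)$, the setup for case (b). For $\xi=1$, Lemma \ref{Lem3.2}(ii) gives $w'(1)=0$; evaluating the ODE $-(rw')'=r\lambda Q(r)$ at $r=1$ with $Q(s):=u(s)e^{u(s)^2}-tv(s)e^{v(s)^2}$ and $u(1)=v(1)=0$ yields $w''(1)=0\geq 0$, and the same reasoning gives $t\in(0,1)$, the setup for case (c).

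The remaining step is to upgrade $w\geq 0$ to $w'\leq 0$ on $[0,\xi]$ (case (b)) or $[0,1]$ (case (c)); this is the main technical obstacle. Integrating the radial ODE from $0$ gives the key identity
\[
rw'(r)=-\int_0^r s\lambda\,Q(s)\,ds,
\]
so $w'(r)\leq 0$ is equivalent to $F(r):=\int_0^r sQ(s)\,ds\geq 0$, with the boundary constraint $F(0)=F(\xi)=0$ coming from $w'(\xi)=0$. Endpoint values give $Q(0)>0$ (since $u(0)>v(0)$ and $t<1$) and $Q(\xi)\leq 0$ for $\xi<1$ via the identity $Q(\xi)=tv(\xi)\bigl(e^{t^2v(\xi)^2}-e^{v(\xi)^2}\bigr)$; in case (c), $Q$ vanishes at $r=1$ with a sign near $r=1$ determined by the Taylor expansion of $w$ there.

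The hardest step will be proving that $Q$ changes sign exactly once on $(0,\xi)$, from positive to negative; given this, $F$ is unimodal with matching boundary values $F(0)=F(\xi)=0$, forcing $F\geq 0$ throughout. I would establish this single-sign-change property by analyzing the ratio $\Lambda(r):=u(r)e^{u(r)^2}/(v(r)e^{v(r)^2})$ (whose level set at the value $t$ characterizes the zeros of $Q$) via its logarithmic derivative, controlled through the radial ODE. Alternatively, a contradiction argument is available: using $w''(0)=-\lambda Q(0)/2<0$ (derived from the ODE at the origin), any interior sign change of $w'$ would force $w$ to have an interior local minimum $r_0\in(0,\xi)$ with $w(r_0)>0$; the ODE at $r_0$ then gives $u(r_0)<v(r_0)$, and following $w$ to its subsequent local maximum and applying Lemma \ref{Lem3.3}(i) to a suitably cut-off and rescaled pair on the subinterval ending at $\xi$ would contradict the strict bound $t<1$ already obtained. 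Either route completes the proof of cases (b) and (c).
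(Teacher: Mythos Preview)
Your reduction to the case $u(0)>v(0)$ and the initial application of Lemma~\ref{Lem3.2} on $[0,1]$ are fine, and so is the identity $rw'(r)=-\lambda\int_0^r sQ(s)\,ds$. There is a small slip in the case $\xi=1$: Lemma~\ref{Lem3.3}(i) gives no information there because $v(1)=0$ makes the key inequality $e^{t^2v^2(1)}\le e^{v^2(1)}$ vacuous. This is easily repaired (e.g.\ $t\ge 1$ would force $v\le u$ on $[0,1]$, and the integral identity $\int_{B_1}(-\Delta u\,v+\Delta v\,u)=\lambda\int uv(e^{u^2}-e^{v^2})$ then gives $u\equiv v$), so it is not the real issue.

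The genuine gap is the monotonicity step. Neither of your two routes establishes that $Q$ changes sign exactly once. For route~1, the logarithmic derivative of $\Lambda=ue^{u^2}/(ve^{v^2})$ is $u'/u-v'/v+2uu'-2vv'$, and the ODE does not give a sign for this; there is no reason $\Lambda$ should be monotone. For route~2, your proposed contradiction does not close: Lemma~\ref{Lem3.3}(i) at an interior point yields $t\le 1$, which is \emph{consistent} with the $t<1$ you already have, not a contradiction; and at a local \emph{maximum} of $w$ (where $w>0$) the hypothesis $w=0$ of Lemma~\ref{Lem3.3}(i) is simply not met. More structurally, by fixing once and for all the value $t=t_1$ coming from Lemma~\ref{Lem3.2} on $[0,1]$, you may be asking for something false: for this particular $t_1$ the function $w_1$ could have an interior positive local minimum (so $w_1'$ changes sign), and nothing in your argument rules that out.

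The paper avoids this by \emph{not} fixing $t$. It applies Lemma~\ref{Lem3.2} on every subinterval $[0,\eta]$, obtaining $t_\eta$ and $\xi_\eta$, and shows that if neither (a) nor an interior touching (your case~(b)) ever occurs with some $t<1$, then necessarily $\xi_\eta=\eta$ for all $\eta$; this forces $u/v$ to be decreasing, and from $u/v\le u'/v'$ one gets $w_1'\le 0$ on $[0,1]$, which is case~(c). When an interior touching does occur, the paper takes $\eta_0=\inf\{\eta:\text{(II) holds on }[0,\eta]\}$ and works with $t_{\eta_0}$, which may differ from $t_1$; the same varying-endpoint argument on $[0,\eta_0]$ then yields $w_{\eta_0}'\le 0$ there. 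The moving-endpoint idea is the missing ingredient in your proposal.
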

\begin{proof}
	We first claim that there exist $t\in(0,1)$ and $\xi\in(0,1)$ such that for $w=u-tv$, one of the following conditions holds:

(I) $u(0)=tv(0)$,\\

(II) $w\geq0\text{ in }[0,\xi],w(\xi)=w'(\xi)=0,w''(\xi)\geq0$,\\

(III)$w\geq0,w'\leq0\text{ in }[0,1],w'(1)=0$.
\medskip

From Lemma \ref{Lem3.2}, we can choose $t_1>0$, $\xi_1\in[0,1]$ such that $w_1=u-t_1v$ satisfies $w_1\geq0$ in $[0,1]$, $w(\xi_1)=w'(\xi_1)=0$. Indeed, we have $t_1<1$, since if $t_1\geq1$, it holds $v\leq t_1v\leq u$, this implies that $u\equiv v$, which is a contradiction.

If $\xi_1=0$, then $u(0)=t_1v(0)$ and (i) holds. If $\xi_1\in(0,1)$, then $\xi_1$ is a local minimum and (ii) holds.

Again from Lemma \ref{Lem3.2}, for every $\eta\in(0,\infty)$, one can choose $t_\eta>0$, $\xi_\eta\in[0,\eta]$, such that $w_\eta=u-t_\eta v$ satisfies $w_\eta\geq0$ in $[0,\eta]$, $w_\eta(\xi_\eta)=0$. Now claim that if (I) and (II) do not hold, then $\xi_\eta=\eta$. We prove this by contradiction. Indeed, if $\xi_\eta\in(0,\eta)$, then $\xi_\eta$ is a local minimizer for $w_\eta$ and therefore $w_\eta'(\xi_\eta)=0$, $w_\eta''(\xi_\eta)\geq0$. Hence from (i) of Lemma \ref{Lem3.3} we get $t_\eta<1$, which contradicts the assumption that (II) does not hold. If $\xi_\eta=0$, from (ii) of Lemma \ref{Lem3.3}, we conclude that $t_\eta<1$. This contradicts the assumption that (I) does not hold.

Now, we show that if (I) and (II) do not hold, then $w'(r)\leq0$ for $r\in[0,1]$. We prove this by contradiction. Suppose that there exists $\eta_0\in(0,1)$ such that $w'_1(\eta_0)>0$. We consider the interval $[0,\eta_0]$. From the above argument, we see that $\xi_{\eta_0}=\eta_0$, thus we have $t_{\eta_0}=u(\eta_0)/v(\eta_0)$. Hence for $0\leq r<\eta_0\leq \infty$, we deduce from $w_{\eta_0}(r)\geq0$ that
	$$\frac{u(r)}{v(r)}\geq \frac{u(\eta_0)}{v(\eta_0)}.$$
	Thus $u/v$ is a decreasing function, this implies that $u/v\leq u'/v'$. Let $1>r>\eta_0$, so that
	$$\frac{u(r)}{v(r)}\leq \frac{u(\eta_0)}{v(\eta_0)}\leq \frac{u'(\eta_0)}{v'(\eta_0)}<t_1$$
	and hence $w_1(r)<0$, which is a contradiction. Therefore, $w'_1\leq 0$ in $[0,1]$ and this proves (III), and the claim is proved.
	\medskip

	Next, we show that if (I) and (III) do not hold, then (b) holds. Let
	$$\eta_0=\inf\{\eta>0;\text{  (II) holds in } [0,\eta]\}.$$
	We claim that $\eta_0>0$. We prove this by contradiction. Suppose that $\eta_0=0$, then there exists a sequence $\eta_k\to0$ with $t_{\eta_k}\to t_0$ for some $t_0\leq1$ such that $w_{\eta_k}(\eta_k)=0$. This implies that $u(0)=t_0v(0)$. If $t_0=1$,  then $u\equiv v$, which is a contradiction. If $t_0<1$, then (I) holds, which contradicts our assumption, hence $\eta_0>0$. Now by the definition of $\eta_0$, we have $w_{\eta_0}(\eta_0)=w'_{\eta_0}(\eta_0)=0$, $w''_{\eta_0}(\eta_0)\geq0$. Then it follows from (I) of Lemma \ref{Lem3.3} that $t_{\eta_0}<1$.
\vskip 0.1cm

Now, we show that  $w'_{\eta_0}\leq0$ in $[0,\eta_0]$. We assert that for any  $0<\eta<\eta_0$, one has $\xi_\eta=\eta$, $t_\eta=u(\eta)/v(\eta)$. Indeed, if $\xi_\eta\in(0,\eta)$, then $\xi_\eta$ is a local minimizer of $w_\eta$, and hence $w'_\eta(\xi_\eta)=0$, $w''_\eta(\xi_\eta)\geq0$. Therefore from (I) of Lemma \ref{Lem3.3}, $t_\eta<1$, but this is impossible from the definition of $\eta_0$. If $\xi_\eta=0$, then $t_\eta=u(0)/v(0)$ and hence from (II) of Lemma \ref{Lem3.3}, $t_\eta<1$. Thus (I) holds, which contradicts our assumption. Therefore we have $\xi_\eta=\eta$ and $t_\eta=u(\eta)/v(\eta)$. This implies that $w_\eta(r)>0$ for $0<r<\eta$, and hence $u(r)/v(r)>u(\eta)/v(\eta)$. Thus $u/v$ is a decreasing function in $[0,\eta_0]$ and $u/v\leq u'/v'$ in $[0,\eta_0]$. Suppose that there exists $\eta_1<\eta_0$ such that $w'_{\eta_0}(\eta_1)>0$, then for $\eta_1<r<\eta_0$ we have
$$\frac{u(r)}{v(r)}\leq \frac{u(\eta_1)}{v(\eta_1)}\leq \frac{u'(\eta_1)}{v'(\eta_1)}<t_{\eta_0}.$$
Hence $w_{\eta_0}(r)<0$, which is a contradiction. Therefore $w'_{\eta_0}\leq0$ in $[0,\eta_0]$, and (b) holds in $[0,\eta_0]$. The proof of this lemma is finished.

\end{proof}

\begin{lemma}\label{Lem2.1}
	Let $u$ and $u_2$ be two solutions of \eqref{adeq1} with $u(0)<u_2(0)$. Then there exists a solution $u_1$ of \eqref{adeq1} such that $u_1(0)<u_2(0)$ and $u_1$ and $u_2$ intersect at most once in $(0,1)$.
\end{lemma}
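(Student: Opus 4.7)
The plan is to view positive solutions of \eqref{adeq1} as a one-parameter family parametrized by their value at the origin, and then pick $u_1$ to be the solution whose initial value lies as close to $u_2(0)$ from below as possible. By an extremal shooting argument, such a $u_1$ cannot intersect $u_2$ more than once. More precisely, by standard ODE theory (adapted to the $1/r$-singularity at the origin), for each $\alpha>0$ the IVP $-(r\phi')'=\lambda r\phi e^{\phi^2}$, $\phi(0)=\alpha$, $\phi'(0)=0$, admits a unique $C^2$ solution $\phi(r,\alpha)$ depending continuously on $\alpha$. The monotone energy $E(r)=\tfrac12(\phi')^2+\tfrac{\lambda}{2}(e^{\phi^2}-1)$, with $E'(r)=-(\phi')^2/r\le 0$, forces a unique first zero $R(\alpha)\in(0,\infty)$, and the set $\mathcal T:=\{\alpha:R(\alpha)=1\}$ of initial values giving solutions of \eqref{adeq1} is closed in $(0,\infty)$ and contains both $\alpha_0:=u(0)$ and $\alpha_2:=u_2(0)$.

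Define $\alpha_1:=\sup\bigl(\mathcal T\cap[\alpha_0,\alpha_2)\bigr)$. If $\alpha_1<\alpha_2$, closedness yields $\alpha_1\in\mathcal T$ and I set $u_1:=\phi(\cdot,\alpha_1)$, so that $\mathcal T$ is empty in the open gap $(\alpha_1,\alpha_2)$; if $\alpha_1=\alpha_2$, a sequence $\beta_n\nearrow\alpha_2$ lies in $\mathcal T$ and I take $u_1:=\phi(\cdot,\beta_n)$ for $n$ large, so that $u_1$ is $C^1$-close to $u_2$. To count intersections of $u_1$ and $u_2$ in $(0,1)$, let $w:=u_2-u_1$; with $g(s)=se^{s^2}$ and $c(r):=\int_0^1 g'\bigl(u_1+\theta w\bigr)\,d\theta=\int_0^1\bigl(1+2(u_1+\theta w)^2\bigr)e^{(u_1+\theta w)^2}\,d\theta>0$, $w$ satisfies the linear Sturm--Liouville equation
\begin{equation*}
-(rw')'=\lambda r\,c(r)\,w,\qquad w(0)>0,\ w'(0)=0,\ w(1)=0,
\end{equation*}
so the intersections of $u_1,u_2$ inside $(0,1)$ are precisely the interior zeros of $w$.

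It remains to rule out two or more interior zeros of $w$. Suppose, for contradiction, that $w$ has interior zeros $0<r_1<r_2<1$; combined with $w(0)>0$ and $w(1)=0$, the sign pattern of $w$ is $+,-,+,0$ on the successive subintervals. I would translate this into the shooting parameter $\alpha$ via the tangent vector $\psi:=\partial_\alpha\phi(\cdot,\alpha_1)$, which solves the linearization
\begin{equation*}
-(r\psi')'=\lambda r(1+2u_1^2)e^{u_1^2}\psi,\qquad \psi(0)=1,\ \psi'(0)=0,
\end{equation*}
and which, to first order in $\alpha_2-\alpha_1$, is a positive multiple of $w$. A Sturm oscillation argument combined with the intermediate value theorem then forces the shooting function $\alpha\mapsto\phi(1,\alpha)$ to vanish at some $\alpha^{*}\in(\alpha_1,\alpha_2)$, producing a new element of $\mathcal T$ in the gap $(\alpha_1,\alpha_2)$ and contradicting the choice of $\alpha_1$. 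In the degenerate sub-case $\alpha_1=\alpha_2$, the same contradiction is extracted by renormalizing the differences $w_n$ associated with $\beta_n\nearrow\alpha_2$: after passing to a subsequence, $w_n/\|w_n\|_\infty$ converges in $C^1$ to a non-trivial solution of the linearized Dirichlet problem at $u_2$, whose oscillation is incompatible with having three sign changes.

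The main obstacle is this last step: converting the \emph{adjacency} of $\alpha_1$ and $\alpha_2$ in $\mathcal T$ into a sharp bound on the number of interior zeros of the linear equation for $w$. The difficulty is that the coefficient $\lambda r c(r)$ can be very large near the origin because of the exponential $e^{u_1^2}$, so crude Sturm comparisons with classical eigenfunctions of $-\Delta$ are far too weak to give a bound of $1$. What makes the argument work is the precise correspondence between zeros of $w$ in the radial variable and sign changes of the shooting function $R(\alpha)-1$ in the parameter $\alpha$; once this correspondence is set up rigorously using continuous dependence of $\phi$ on $\alpha$ and the implicit function theorem along the level set $\mathcal T$, the extremal choice of $\alpha_1$ directly excludes any further oscillation of $w$ beyond a single sign change.
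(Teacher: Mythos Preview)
Your outline is essentially a sketch, and the part you yourself flag as ``the main obstacle'' is in fact an unfilled gap. You assert that if $w=u_2-u_1$ had two interior zeros then a Sturm/shooting argument would produce some $\alpha^{*}\in(\alpha_1,\alpha_2)$ with $\phi(1,\alpha^{*})=0$, but you do not carry this out. There are two concrete problems. First, $\phi(1,\alpha^{*})=0$ is not the same as $\alpha^{*}\in\mathcal T$: you need $\phi(\cdot,\alpha^{*})>0$ on $(0,1)$, i.e.\ that $r=1$ is the \emph{first} zero, and nothing in your argument controls earlier zeros of $\phi(\cdot,\alpha)$ as $\alpha$ moves. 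Second, the ``precise correspondence between zeros of $w$ in the radial variable and sign changes of $R(\alpha)-1$'' is a heuristic, not a theorem; making it rigorous requires tracking how the zero set of $(r,\alpha)\mapsto\phi(r,\alpha)-u_2(r)$ deforms as $\alpha$ ranges over $(\alpha_1,\alpha_2)$, including ruling out tangencies and zeros escaping through $r=0$ or $r=1$ in the wrong way. Your degenerate sub-case $\alpha_1=\alpha_2$ is even sketchier: you claim the normalized limit solves the linearized Dirichlet problem at $u_2$ and that ``three sign changes'' is impossible there, but you give no reason the linearized eigenproblem cannot have an eigenfunction with several zeros.

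The paper proceeds quite differently, and its argument is complete. Instead of moving $\alpha$ \emph{up} toward $\alpha_2$ and invoking extremality in $\mathcal T$, it moves $\alpha$ \emph{down} from $\alpha_0=u(0)$ toward $0$ and tracks the first two intersection points $R_1(\alpha)<R_2(\alpha)$ of $\phi(\cdot,\alpha)$ with the fixed profile $u_2$. By continuity these persist for $\alpha$ just below $\alpha_0$; as $\alpha\to 0$ one of three things must happen: (i) the second intersection reaches $r=1$, i.e.\ $R_2(\alpha_1)=1$, which immediately gives the desired $u_1$; (ii) the two graphs become tangent at some interior point, impossible by uniqueness for the IVP; or (iii) the interval $[R_1(\alpha),R_2(\alpha)]$ shrinks to a point. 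Case (iii) is ruled out by a one-line eigenvalue estimate: on $I(\alpha)=[R_1(\alpha),R_2(\alpha)]$ the difference $v=\phi(\cdot,\alpha)-u_2$ is a positive first eigenfunction of $-(r v')'=Qv$ with eigenvalue $1$, where $Q$ is bounded on $I(\alpha)$ uniformly in $\alpha$ (since $R_1(\alpha)\ge R_1>0$ keeps $\phi$ and $u_2$ bounded there); comparison with the Dirichlet Laplacian on $I(\alpha)$ then forces $|I(\alpha)|$ to stay bounded below, a contradiction. This bypasses entirely the delicate shooting-to-Sturm correspondence you are trying to set up.
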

\begin{proof}

Let $\alpha>0$ and $w(\cdot,\alpha)$ be the unique solution of the following initial-value problem,
\begin{equation}
	\begin{cases}
		-(rw')'=rf(w), \\
		w(0)=\alpha ,  w'(0)=0,
	\end{cases}
\end{equation}
where $f(w)=\lambda we^{w^2}$. We also denote $R(\alpha)$ the first zero of $w(\cdot,\alpha)$ defined by
$$R(\alpha)=\sup\{r:w(s,\alpha)>0 \text{ for all }  s\in[0,r]\}.$$
Assume that $u$ and $u_2$ intersect at least twice (otherwise there is nothing to prove). Let $0 < R_1 < R_2 < 1$ be the first two consecutive points of intersection with $u(r)<u_2(r)$ for all $r\in(0,R_1)$.
Clearly, $w(r,\alpha_0)=u(r)$ and $w(r,\alpha_2)=u_2(r)$, where $\alpha_0=u(0)$, $\alpha_2=u_2(0)$ with $R(\alpha_0)=R(\alpha_2)=1$.
Let $\alpha<\alpha_0$ and be close to $\alpha_0$, and  $0 < R_1(\alpha) < R_2(\alpha) < \infty$ be the first two consecutive points of intersections of $w(\cdot,\alpha)$ with $u_2$ (which exist by continuity) such that $w(r,\alpha)\leq u_2(r)$ for $r\in(0,R_1(\alpha))$.

Now, as $\alpha$ moves towards zero, one of the following three possibilities holds.

(i) There exists a $\alpha_1\in(0,\alpha_0)$ such that $R_2(\alpha_1) =1$. Then the conclusion of the lemma holds.\\

(ii) There exists a $\alpha_1\in(0,\alpha_0)$ and a point $R\in(0,1)$ such that
$$w(R,\alpha_1)=u_2(R),\quad\quad\quad\quad w'(R,\alpha_1)=u_2'(R). $$
Then, by uniqueness of the initial-value problem, $w(r,\alpha_1) = u_2(r)$ for all $r \in(0,1)$, which is a contradiction.\\

(iii) $0 < R_1(\alpha) < R_2(\alpha) < 1$ for all $\alpha$ and $\lim_{\alpha\to0} (R_2(\alpha)- R_1(\alpha))=0$. In this cases, let $I(\alpha)=[R_1(\alpha) , R_2(\alpha)]$ and $v(r)=w(r,\alpha)-u_2(r)$. Then $v(r)$ satisfies
\begin{equation}\label{R2.3}
	\begin{cases}
		-(rv')'=Qv, \quad\quad & r\in I(\alpha), \\
		v>0, \quad\quad & r\in I(\alpha), \\
		v(R_1(\alpha))=v(R_2(\alpha))=0,
	\end{cases}
\end{equation}
where
$$Q(r)=r\frac{f(w(r,\alpha)-f(u_2(r))}{w(r,\alpha)-u_2(r)}.$$
Hence $v$ is the first eigenfunction with eigenvalue $\mu_1 = 1$ of the following eigenvalue problem:
\begin{equation*}
	\begin{cases}
		-(r\varphi')'=\mu Q\varphi,  \quad\quad & r\in I(\alpha), \\
		\varphi=0,  & r\in  \partial I(\alpha).
	\end{cases}
\end{equation*}
For $\alpha\in(0,\alpha_0)$ and $0<R_1\leq R_1(\alpha), $ from (H2) we derive
\begin{equation}
	M=\sup\{Q(r):  r\in I(\alpha),\alpha\in(0,\alpha_0)\}<\infty.
\end{equation}
Let $\lambda_1(\alpha)$ be the first eigenvalue of
\begin{equation}\label{R2.5}\begin{cases}
	-\frac{d^{2}\varphi}{dr^2}=\lambda\varphi, \quad &x\in I(\alpha), \notag\\
	\varphi=0, \quad\quad &x\in  \partial I(\alpha).
\end{cases}\end{equation}
Then, from \eqref{R2.3}-\eqref{R2.5}, we get
\begin{equation}\label{R2.6}
	\begin{split}
		1&=\inf\bigg\{\frac{\int_{I(\alpha)}{r(\varphi')^2}dr}{\int_{I(\alpha)}{Q\varphi^2}dr}; \varphi\in H_0^1(I(\alpha))\bigg\}\\
		&\geq\frac{R_1}{M}\bigg\{\frac{\int_{I(\alpha)}{(\varphi')^2}dr}{\int_{I(\alpha)}{\varphi^2}dr}; \varphi\in H_0^1(I(\alpha))\bigg\}\\
		&\geq\frac{R_1}{M}\lambda_1(\alpha).
	\end{split}
\end{equation}
Since $R_2(\alpha)- R_1(\alpha)\to0$ as $\alpha\to0$, we derive that $\lambda_1(\alpha)\to \infty$, which contradicts \eqref{R2.6}. This proves the lemma.\\
\end{proof}
 \begin{lemma}\label{lem2.5}
 Let $u$ and $u_2$ be two solutions of \eqref{adeq1} with $u(0)<u_2(0)$, then there exists $u_1$ of \eqref{adeq1} such that $u_1(0)<u_2(0)$ and $u_1$ and $u_2$ intersect only once in $(0,1)$. Moreover, $\frac{u_1(r)}{u_2(r)}$ is strictly increasing.
 \end{lemma}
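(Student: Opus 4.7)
The plan is to start from Lemma \ref{Lem2.1}, which furnishes a solution $u_1$ of \eqref{adeq1} with $u_1(0)<u_2(0)$ intersecting $u_2$ at most once in $(0,1)$, and then upgrade both claims simultaneously via a single Wronskian-type identity. Set
\begin{equation*}
W(r):=r\bigl(u_1'(r)u_2(r)-u_1(r)u_2'(r)\bigr),\qquad r\in[0,1].
\end{equation*}
Since both $u_i$ are bounded with $u_i'(0)=0$ we have $W(0)=0$, and since $u_1(1)=u_2(1)=0$ we also have $W(1)=0$. Differentiating and using $-(ru_i')'=\lambda r u_i e^{u_i^2}$, a direct computation yields
\begin{equation*}
W'(r)=(ru_1')'u_2-u_1(ru_2')'=\lambda r\,u_1(r)u_2(r)\bigl(e^{u_2^2(r)}-e^{u_1^2(r)}\bigr),
\end{equation*}
so on $(0,1)$ the sign of $W'(r)$ agrees with the sign of $u_2(r)-u_1(r)$. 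Moreover, an elementary calculation gives
\begin{equation*}
\left(\frac{u_1}{u_2}\right)'(r)=\frac{W(r)}{r\,u_2(r)^2},\qquad r\in(0,1),
\end{equation*}
so the monotonicity of $u_1/u_2$ reduces to the positivity of $W$ on $(0,1)$.

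The first step is to show that $u_1$ and $u_2$ must intersect at least once in $(0,1)$. If not, then since $u_1(0)<u_2(0)$ we would have $u_1<u_2$ throughout $(0,1)$, so $W'>0$ on $(0,1)$; combined with $W(0)=0$ this forces $W(1)>0$, contradicting $W(1)=0$. Invoking Lemma \ref{Lem2.1}, $u_1$ and $u_2$ therefore intersect at exactly one point $r^*\in(0,1)$. The uniqueness of the initial value problem (applied backwards from $r^*$) rules out a tangential touch, so the intersection is transversal, with $u_1<u_2$ on $(0,r^*)$ and $u_1>u_2$ on $(r^*,1)$.

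With this picture in hand, the positivity of $W$ on $(0,1)$, and hence the strict monotonicity of $u_1/u_2$, follows: on $(0,r^*)$ we have $W'>0$ and $W(0)=0$, whence $W>0$; on $(r^*,1)$ we have $W'<0$, so $W$ is strictly decreasing, and since $W(1)=0$ it must stay strictly positive throughout $(r^*,1)$ (otherwise it would dip below zero and remain so at $r=1$). Continuity of $W$ at $r^*$ ties the two halves together, so $W>0$ on all of $(0,1)$, and therefore $(u_1/u_2)'>0$ on $(0,1)$.

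There is no serious obstacle in this argument; the only delicate point is recognizing that both endpoint values $W(0)$ and $W(1)$ vanish (the latter because both solutions satisfy the Dirichlet condition at $r=1$), which is exactly what pins $W$ to the positive side on the whole interval. The Wronskian identity simultaneously delivers the sharpening of Lemma \ref{Lem2.1} from ``at most once'' to ``exactly once'' and the strict monotonicity of $u_1/u_2$, which are the two assertions of Lemma \ref{lem2.5}.
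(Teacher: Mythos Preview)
Your proof is correct and follows essentially the same route as the paper's. The paper works with the integral identity $\int_{B_r}(-\Delta u_2\,u_1+\Delta u_1\,u_2)\,dx=2\pi r\bigl(u_1'(r)u_2(r)-u_2'(r)u_1(r)\bigr)$ over $B_r$ (and over $B_1\setminus B_r$ for $r>r^*$), which is precisely the integrated form of your Wronskian identity $W'(r)=\lambda r\,u_1u_2(e^{u_2^2}-e^{u_1^2})$; the two arguments are the differential and integral versions of the same computation, and the boundary conditions $W(0)=W(1)=0$ that you use correspond exactly to the paper's integration-by-parts boundary terms.
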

 \begin{proof}
According to Lemma \ref{Lem2.1}, in order to prove that $u_1$ and $u_2$ intersect only once in $(0,1)$, we only need to prove that
$u_1$ and $u_2$ intersect at least once. We argue this by contradiction. Assume that $u_1$ and $u_2$ does not intersect in $(0,1)$, then $u_2>u_1$ in $(0, 1)$. Since $u_1$ and $u_2$ satisfies equation \eqref{adeq1}, we have
$$\int_{B_1}(-\Delta u_2 u_1+\Delta u_1 u_2)dx=\lambda\int_{B_1}u_2u_1(e^{u_2^2}-e^{u_1^2})dx>0.$$
On the other hand, integration by parts directly gives that
$$\int_{B_1}(-\Delta u_2 u_1+\Delta u_1 u_2)dx=2\pi(u_1'(1)u_2(1)-u_2'(1)u_1(1))=0,$$
which is obviously a contradiction.

Now, we start to prove that $\frac{u_1(r)}{u_2(r)}$ is strictly increasing. Assume that the only intersection point of $u_1$ and $u_2$ is $r_0$, then $u_2(r)\geq u_1(r)$ in $(0, r_0]$. By equation \eqref{adeq1}, one can directly obtain that for any $r\in (0,r_0]$,
\begin{equation*}\begin{split}
\int_{B_r}(-\Delta u_2 u_1+\Delta u_1 u_2)dx=\int_{B_r}u_2u_1(e^{u_2^2}-e^{u_1^2})dx>0.
\end{split}\end{equation*}
On the other hand, integration by parts directly gives
$$\int_{B_r}(-\Delta u_2 u_1+\Delta u_1 u_2)dx=2\pi(u_1'(r)u_2(r)-u_2'(r)u_1(r))r.$$
Combining the above estimate, we derive that $u_1'(r)u_2(r)-u_2'(r)u_1(r)>0$ for $r\in (0,r_0]$, that is $\frac{u_1(r)}{u_2(r)}$ is strictly increasing in $(0,r_0]$. Now, we will prove that $u_1'(r)u_2(r)-u_2'(r)u_1(r)>0$ holds for $r\in (r_0, 1)$. By equation \eqref{adeq1}, one can similarly obtain
$$\int_{B_1\setminus B_r}(-\Delta u_2 u_1+\Delta u_1 u_2)dx=\int_{B_1\setminus B_r}u_2u_1(e^{u_2^2}-e^{u_1^2})dx<0.$$
Again applying integration by parts, one can derive that
$$2\pi(u_1'(r)u_2(r)-u_2'(r)u_1(r))r=-\int_{B_1\setminus B_r}(-\Delta u_2 u_1+\Delta u_1 u_2)dx>0.$$
Then we accomplish the proof of Lemma \ref{lem2.5}.
\end{proof}

Now, we are in position to give the proof of the uniqueness of positive radial solution of equation \eqref{adeq1}.
\begin{proof}[Proof of Theorem \ref{thm1}]
Assume that $u$ and $v$ are two positive solutions of equation \eqref{adeq1} with $v(0)<u(0)$, $u$ and $v$ intersecting only once in $(0,1)$. According to Lemma \ref{Lem3.1}, there exist $t\in(0,1)$, $\xi\in(0,1)$ such that for $w=u-tv$ one of the following holds:\\

\text{(a)} $u(0)=tv(0)$,\\

\text{(b)} $w\geq0,w'\leq0\text{ in }[0,\xi],w(\xi)=w'(\xi)=0$,\\

\text{(c) } $w\geq0,w'\leq0\text{ in }[0,1],w'(1)=0$.
\medskip

Obviously, $(a)$ does not occur. According to Lemma \ref{lem2.5}, we know that $\frac{v(r)}{u(r)}$ is strictly increasing in $(0,1)$. This gives that for any $r\in (0,1)$, $\frac{v'(r)}{u'(r)}<\frac{v(r)}{u(r)}$. Hence it follows that there does not exists $\xi \in (0,1)$ such that $w(\xi)=w'(\xi)=0$, that is to say that $(b)$ is also impossible. In order to prove the uniqueness theorem, we only need to prove that $(c)$ does not happen.  Assume that $(c)$ happens, then $w(1)=w'(1)=0$. A simple calculation combining with $u'(1)=tv'(1)$ and $u(1)=v(1)=0$ yields that
\begin{equation}\label{r1}
u^{(2)}(1)=tv^{(2)}(1),\ u^{(3)}(1)=tv^{(3)}(1),\ u^{(4)}(1)=tv^{(4)}(1)
\end{equation}
and
\begin{equation}\label{r2}
u^{(5)}(1)-tv^{(5)}(1)=-6\big((u')^3(1)-t(v')^3(1)\big)=-6u'(1)\big((u')^2(1)-(v')^2(1)\big).
\end{equation}

Furthermore, we also have
\begin{equation*}\begin{split}\label{t2}
u^{(6)}(1)-tv^{(6)}(1)+u^{(5)}(1)-tv^{(5)}(1)+36(u')^2(1)u''(1)-36t(v')^2(1)v''(1)=0.
\end{split}\end{equation*}
This together with $u''(1)+u'(1)=0$ and \eqref{r2} gives that
\begin{equation}\begin{split}\label{t2}
u^{(6)}(1)-tv^{(6)}(1)&=36(u')^3(1)-36t(v')^3(1)-\big(u^{(5)}(1)-tv^{(5)}(1)\big)\\
&=36u'(1)\big((u'(1))^2-(v'(1))^2)\big)+6u'(1)\big((u'(1))^2-(v'(1))^2\big)\\
&=42u'(1)\big((u'(1))^2-(v'(1))^2\big)
\end{split}\end{equation}

Since $u$ and $v$ satisfy equation \eqref{adeq2}, using the Pohozaev identity and combining the radial symmetry of $u$ and $v$, we have
$$\int_{B_r}-\Delta u(x\cdot\nabla u)dx=-\frac{1}{2}\int_{\partial B_r}|\nabla u|^2(x\cdot \nu)d\sigma=-\pi r^2(u')^2(r)$$
and
\begin{equation*}\begin{split}
\int_{B_r}ue^{u^2}(x\cdot\nabla u)=&\frac{1}{4}\int_{B_r}\nabla(e^{u^2}-1)\nabla(|x|^2)\\
&=-\frac{1}{4}\int_{B_r}(e^{u^2}-1)\Delta(|x|^2)dx+\frac{1}{4}\int_{\partial B_r}(e^{u^2}-1)\frac{\partial |x|^2}{\partial \nu}ds\\
&=-\int_{B_r}(e^{u^2}-1)dx+\pi r^2(e^{u^2(r)}-1).
\end{split}\end{equation*}
That is $$\pi r^2(u')^2(r)=\int_{B_r}(e^{u^2}-1)dx-\pi r^2(e^{u^2(r)}-1).$$
This deduces that
\begin{equation}\label{xin1}
\pi r^2(u')^2(1)-\pi r^2(u')^2(r)=\int_{B_1\setminus B_r}(e^{u^2}-1)dx+\pi r^2(e^{u^2(r)}-1).
\end{equation}

Similarly, we also have
\begin{equation}\label{xin2}
\pi r^2(v')^2(1)-\pi r^2(v')^2(r)=\int_{B_1\setminus B_r}(e^{v^2}-1)dx+\pi r^2(e^{v^2(r)}-1).
\end{equation}

Multiplying \eqref{xin2} by $t^2$, and  then subtracting  \eqref{xin1}, we can obtain from  $u'(1)=tv'(1)$ that
\begin{equation}
\label{ad21}\begin{split}
\pi r^2((u')^2(r)-t^2(v')^2(r))&=\int_{B_1\setminus B_r}\big(t^2(e^{v^2}-1)-(e^{u^2}-1)\big)dx+\pi r^2\big(t^2(e^{v^2}-1)-(e^{u^2}-1)\big)\\
&=\pi r^2(I+II).
\end{split}\end{equation}
In view of \eqref{r1},\eqref{r2} and \eqref{t2}, we have
\begin{equation}\label{r3}\begin{split}
&(u')^2(r)-t^2(v')^2(r)=(u'(r)+tv'(r))(u'(r)-tv'(r))\\
=&\big(2u'(1)+2u''(1)(r-1)+O((r-1)^2)\big) \times \big(\frac{1}{4!}(u^{(5)}(1)-tv^{(5)}(1))(r-1)^4+\\ &+\frac{1}{5!}(u^{(6)}(1)-tv^{(6)}(1)(r-1)^5+O((r-1)^6)\big)\\
=&-\frac{1}{2}(u')^2(1)\big((u')^2(1)-(v')^2(1)\big)(r-1)^4+\frac{1}{2}(u')^2(1)\big((u')^2(1)-(v')^2(1)\big)(r-1)^5+\\
&\ \ +\frac{84}{120}(u')^2(1)\big((u')^2(1)-(v')^2(1)\big)(r-1)^5+O((r-1)^6).
\end{split}\end{equation}
For $II$, we have
\begin{equation*}\begin{split}
II&=\big(t^2v^2(r)-u^2(r)+\frac{1}{2}t^2v^4(r)-\frac{1}{2}u^4(r)\big)+O((r-1)^6)\\
&=\big((tv(r)+u(r))(tv(r)-u(r))+\frac{1}{2}\big((tv)^2v^2(r)-u^4(r)\big)\big)+O((r-1)^6)\\
&=\big(2u'(1)(r-1)+O((r-1)^2)\big)\big((\frac{tv^{(5)}(1)-u^{(5)}(1)}{5!})(r-1)^5+O((r-1)^6)\big)+\\
&\ \ +\frac{1}{2}\big(u'(1)(r-1)+\frac{1}{2}u''(1)(r-1)^2+O((r-1)^3)\big)^2\big(v'(1)(r-1)+\frac{ v''(1)}{2}(r-1)^2+\\ &\ \ + O((r-1)^3)\big)^2-\frac{1}{2}\big(u'(1)(r-1)+\frac{1}{2}u''(1)(r-1)^2+O((r-1)^3)\big)^4+O((r-1)^6)\\
&=\frac{1}{2}\big(u'(1)(r-1)+\frac{1}{2}u''(1)(r-1)^2+O((r-1)^3)\big)^2\times \Big(\big(v'(1)(r-1)+\frac{1}{2}v''(1)(r-1)^2+\\&\ \ +O((r-1)^3)\big)^2-\big(u'(1)(r-1)+\frac{ u''(1)}{2}(r-1)^2+O((r-1)^3)\big)^2\Big)+O((r-1)^6)\\
&=\frac{1}{2}(u')^2(1)((v')^2(1)-(u')^2(1))(r-1)^4+\frac{\pi r^2}{2} u(1)u''(1)\big((v')^2(1)-(u')^2(1)\big)(r-1)^5\\
&\ \ +\frac{1}{2}(u')^2(1)(v'(1)v''(1)-u'(1)u''(1))(r-1)^5+O((r-1)^6)\\
&=-\frac{1}{2}(u'(1))^2((u')^2(1)-(v')^2(1))(r-1)^4+(u')^2(1)((u')^2(1)-(v')^2(1))(r-1)^5+\\&\ \ +O((r-1)^6),
\end{split}\end{equation*}
where in the last step, we have used the fact that $u''(1)+u'(1)=0$.

For $I$, we have
\begin{equation*}\begin{split}
I&=-\frac{1}{2\pi r^2}(u')^2(1)((u')^2(1)-(v')^2(1))(r-1)^4 \pi (1-r^2)+O((r-1)^6)\\
&=(u')^2(1)((u')^2(1)-(v')^2(1))(r-1)^5+O((r-1)^6).
\end{split}\end{equation*}
Combining \eqref{ad21}, \eqref{r3} and the estimates of $I$ and $II$, we derive that
$$\big(\frac{1}{2}+\frac{84}{120}\big)(u')^2(1)\big((u')^2(1)-(v')^2(1)\big)(r-1)^5=2(u')^2(1)\big((u')^2(1)-(v')^2(1)\big)(r-1)^5,$$
which is a contradiction. Hence $(c)$ is impossible. This accomplishes the proof of Theorem \ref{thm1}.
\end{proof}
\section{The proof of Theorem \ref{adthm2}}
In this section, we will prove the uniqueness of positive solution for elliptic equation in the ball of hyperbolic space and give quantization result for positive critical point of Trudinger-Moser functional in the ball of hyperbolic space.
Assume that $u_\lambda$ is a positive solution of equation \eqref{adeq3}, by using the moving plane method on hyperbolic spaces we can show that $u_{\lambda}$ must be radially symmetric  about the origin and decreasing. (see e.g., \cite{LLW}). By the conformal invariance between $(B, dV_\mathbb{H})$ and $(B, dx)$, one can rewrite the elliptic equation in the ball of hyperbolic space into the elliptic equation in ball $B_{\tilde{R}}$ of Euclidean space $\mathbb{R}^2$ with radius $\tilde{R}$ equal to $\frac{e^{R}-1}{e^{R}+1}$:

\begin{equation}\label{elliptic}
\begin{cases}
-\Delta u_{\lambda}=\lambda u_\lambda e^{u_\lambda^2}(\frac{2}{1-|x|^2})^2,\quad\quad & x\in B_{\tilde{R}}, \\
u_\lambda>0,\quad\quad & x\in B_{\tilde{R}},\\
0<\lambda<\lambda_1(B_\mathbb{H}(0,R)),\\
u_\lambda=0,\quad\quad &x\in \partial B_{\tilde{R}}.
\end{cases}
\end{equation}
 Hence $u_\lambda$ satisfies the following ODE equation
\begin{equation}\label{ode1-}
		\begin{cases}
			-(ru_\lambda')'=r\lambda u_\lambda e^{u_\lambda^2}(\frac{2}{1-r^2})^2, \quad\quad & r\in(0,\tilde{R}), \\
			u_\lambda>0,\quad\quad & r\in(0,\tilde{R}),\\
			u_\lambda'(0)=0, u_\lambda(\tilde{R})=0.
		\end{cases}
	\end{equation}
Slightly changing the proof of Theorem \ref{thm1}, one can obtain that $u_\lambda$ is unique. Hence $u_\lambda$ is also a least-energy solution of elliptic equation \eqref{elliptic}. Now we are in position to give quantization result for least energy solution of elliptic equation \eqref{elliptic}.
\medskip

We recall that $u_\lambda$ is a least energy solution of elliptic equation \eqref{elliptic} if its functional energy $I_{\lambda}(u)$ defined by $$I_{\lambda}(u)=\frac{1}{2}\int_{B_\mathbb{H}(0,R)}|\nabla_{\mathbb{H}}u|^2dV_{\mathbb{H}}-
\frac{\lambda}{2}\int_{B_\mathbb{H}(0,R)}(e^{u^2}-1)dV_{\mathbb{H}}$$ is equal to $m_\lambda:=\min\{I_{\lambda}(u):\ I_\lambda'(u)=0\}$, which is equivalent to say $I_{\lambda}(u_\lambda)=m_\lambda$. Now, we claim that $\lim\limits_{\lambda\rightarrow 0}m_{\lambda}$ has the positive lower bound. We argue this by contradiction. Suppose not, then $\lim\limits_{\lambda\rightarrow 0}m_{\lambda}=0$, which together with $I_{\lambda}'(u_\lambda)u_{\lambda}=0$ implies that $\lim\limits_{\lambda\rightarrow 0}\int_{B_\mathbb{H}(0,R)}|\nabla_{\mathbb{H}}u_{\lambda}|^2dV_{\mathbb{H}}=0$. Then it follows that $e^{u_\lambda^2}$ is bounded in $L^q(B_\mathbb{H}(0,R))$ for any $q>1$. Let $v_{\lambda}=\frac{u_{\lambda}}{\|\nabla_{\mathbb{H}}u_{\lambda}\|_2}$, then $v_\lambda$ is bounded in $W^{1,2}(B_\mathbb{H}(0,R))$. Hence, there exists $v_0\in W^{1,2}(B_\mathbb{H}(0,R))$ such that $v_\lambda$ strongly converges to $v_0$ in $L^{q}(B_\mathbb{H}(0,R))$ for any $q>1$. Noticing that $I_{\lambda}'(u_\lambda)u_{\lambda}=0$, one can write
$1=\lambda \int_{B_\mathbb{H}(0,R)}v_\lambda^2 e^{u_\lambda^2}dV_{\mathbb{H}}$.
This together with the boundedness of $v_\lambda^2$ and $e^{u_\lambda^2}$ in $L^q(B_\mathbb{H}(0,R))$ yields that
$$\lim\limits_{\lambda\rightarrow 0}\lambda \int_{B_\mathbb{H}(0,R)}v_\lambda^2 e^{u_\lambda^2}dV_{\mathbb{H}}=0,$$ which is a contradiction.
This proves that there exists $c_0>0$ such that $\lim\limits_{\lambda\rightarrow 0}\int_{B_\mathbb{H}(0,R)}|\nabla_{\mathbb{H}}u_\lambda|^2dV_{\mathbb{H}}\geq c_0$. Through $I_{\lambda}'(u_\lambda)u_\lambda=0$, we have that
\begin{equation}\label{addd3}\int_{B_\mathbb{H}(0,R)}\left\vert
\nabla_\mathbb{H} u_\lambda\right\vert ^{2}dV_{\mathbb{H}}=\lambda\int_{B_\mathbb{H}(0,R)}u_\lambda^2\exp(u_\lambda^2)dV_{\mathbb{H}},\end{equation}
which implies that \begin{equation}\label{infi}\lim\limits_{\lambda\rightarrow0}\int_{B_\mathbb{H}(0,R)}u_\lambda^2\exp(u_\lambda^2)dV_{\mathbb{H}}=+\infty.\end{equation} This deduces $c_\lambda:=\lim\limits_{\lambda\rightarrow 0} u_\lambda(0)=+\infty$, that is to say that $u_\lambda$ blows up at the origin. Now, we will prove that \begin{equation}\label{addd}\lim\limits_{\lambda\rightarrow 0}\int_{B_\mathbb{H}(0,R)}|\nabla_{\mathbb{H}}u_\lambda|^2dV_{\mathbb{H}}=4\pi.\end{equation}
Since $u_\lambda$ is the least energy critical point of functional $I_{\lambda}(u)$, one can deduce that \begin{equation}\label{rang}0<I_{\lambda}(u_\lambda)<2\pi\end{equation} by using the Trudinger-Moser inequality on compact manifold (see the Appendix). This together with $I'(u_\lambda)u_\lambda=0$ yields $u_\lambda$ is bounded in $W^{1,2}_{0}(B_\mathbb{H}(0,R))$ if $\lambda \rightarrow 0$. Then there exists some $u_0$ such that $u_\lambda\rightharpoonup u_0$ weakly in $W^{1,2}_0(B_\mathbb{H}(0,R))$ and $u_\lambda\rightarrow u_0$ strongly in $L^{q}(B_\mathbb{H}(0,R))$ for any $q>1$. Now we claim that \begin{equation}\label{lim1}\lim\limits_{\lambda\rightarrow 0}\int_{B_\mathbb{H}(0,R)}|\nabla_{\mathbb{H}}u_\lambda|^2dV_{\mathbb{H}}=\lim\limits_{\lambda\rightarrow 0}2I_\lambda(u_\lambda)\leq 4\pi.\end{equation} This is mainly because

\begin{equation}\label{4pai}\lim\limits_{\lambda\rightarrow 0}\lambda\int_{B_\mathbb{H}(0,R)}e^{u_{\lambda}^2}dV_{\mathbb{H}}=0.
\end{equation}
Indeed, we can write \begin{equation}\begin{split}
\lambda\int_{B_\mathbb{H}(0,R)}e^{u_{\lambda}^2}dV_{\mathbb{H}}&=\lambda \int_{\{|u_\lambda|>M\}}e^{u_\lambda^2}dV_{\mathbb{H}}+\lambda\int_{\{|u_\lambda|\leq M\}}e^{u_\lambda^2}dV_{\mathbb{H}}\\
&=I_1+I_2.\\
\end{split}\end{equation}
For $I_1$, through $\lambda\int_{B_\mathbb{H}(0,R)}u_{\lambda}^2e^{u_\lambda^2}dV_{\mathbb{H}}\lesssim 1$, we obtain
\begin{equation}\begin{split}
I_1\leq \frac{\lambda}{M^2}\int_{B_\mathbb{H}(0,R)}u_{\lambda}^2e^{u_\lambda^2}dV_{\mathbb{H}}\lesssim\frac{1}{M^2},
\end{split}\end{equation}
which implies $\lim\limits_{M\rightarrow \infty}\lim\limits_{\lambda\rightarrow +\infty}I_1=0$.
For $I_2$, Obviously $I_2\leq \lambda e^{M^2}\text{Vol}_{\mathbb{H}}(B_\mathbb{H}(0,R))$, hence $\lim\limits_{M\rightarrow \infty}\lim\limits_{k\rightarrow +\infty}I_2=0$. Combining the estimates of $I_1$ and $I_2$, we conclude that $$\lim\limits_{k\rightarrow+\infty}\lambda\int_{B_\mathbb{H}(0,R)}e^{u_\lambda^2}dV_{\mathbb{H}}=0,$$
hence the claim is proved.

 Now, we are in position to prove that $\lim\limits_{\lambda\rightarrow 0}\int_{B_\mathbb{H}(0,R)}|\nabla_{\mathbb{H}}u_\lambda|^2dV_{\mathbb{H}}=4\pi$. We argue this by contradiction. Suppose that $\lim\limits_{\lambda\rightarrow 0}\int_{B_\mathbb{H}(0,R)}|\nabla_{\mathbb{H}}u_\lambda|^2dV_{\mathbb{H}}<4\pi$, it follows from  Trudinger-Moser inequalities (see e.g., \cite{LT}, \cite{MS}) that $\int_{B_\mathbb{H}(0,R)}u_\lambda^2e^{u_\lambda^2}dV_{\mathbb{H}}$ is bounded, which is a contradiction with  \eqref{infi}. Hence \eqref{4pai} holds.

  Next, we claim $$|\nabla_{\mathbb{H}}u_\lambda|^2dV_{\mathbb{H}}\rightharpoonup 4\pi \delta_{0}.$$ We argue this by contradiction. Suppose not, then there exists some $\delta>0$ such that $$\int_{B_\mathbb{H}(0,\delta)}|\nabla_{\mathbb{H}}u_\lambda|^2dV_{\mathbb{H}}<4\pi.$$ Hence, through the Trudinger-Moser inequality again, we derive that
$ u_\lambda e^{u_\lambda^2} \in L^{p}(B_\mathbb{H}(0,\delta))$ for some $p>1$. Then it follows from the standard elliptic estimates that $$\sup_{\lambda}\|u_{\lambda}\|_{C^1(B_\mathbb{H}(0,\delta))}<\infty,$$ which is a contradiction with $\lim\limits_{\lambda\rightarrow +\infty}u_{\lambda}(0)=+\infty$.

\emph{The proof of (ii) of Theorem \ref{adthm2}:}
when $\lambda\rightarrow\lambda_0 \in (0, \lambda_1(B_\mathbb{H}(0,R))$, one can similarly obtain that there exist $0<\rho_1<\rho_2<2\pi$ such that   $$\rho_1\leq \lim\limits_{\lambda\rightarrow\lambda_0}I_\lambda(u_\lambda)\leq \rho_2<2\pi$$  as $\lambda\rightarrow\lambda_0$.
Gathering this and $I'(u_\lambda)u_\lambda=0$, we deduce that $u_\lambda$ is bounded in $W^{1,2}_{0}(B_\mathbb{H}(0,R)))$. Then there exists some $u_0$ such that $u_\lambda\rightharpoonup u_0$ weakly in $W^{1,2}_0(B_\mathbb{H}(0,R))$. Next, we will show that $u_0$ satisfies equation
\begin{equation}\label{limit-equation}
\begin{cases}
-\Delta_{\mathbb{H}} u_0 =\lambda_0 u_0e^{u_0^2}, \quad\quad & x\in B_\mathbb{H}(0,R), \\
u_0>0, \quad\quad & x\in  B_\mathbb{H}(0,R), \\
u_0=0, \quad\quad &x\in \partial B_\mathbb{H}(0,R).
\end{cases}
\end{equation}
For this purpose, according to the definition of weak solution, we only need to prove that $$\lim\limits_{\lambda\rightarrow \lambda_0}\lambda \int_{B_\mathbb{H}(0,R)}u_\lambda e^{u_\lambda^2}dV_{\mathbb{H}}=\lambda_0\int_{B_\mathbb{H}(0,R)}u_0e^{u_0^2}dV_{\mathbb{H}}.$$
Indeed, we can write \begin{equation}\begin{split}
\lambda\int_{B_\mathbb{H}(0,R)}u_\lambda e^{u_\lambda^2}dV_{\mathbb{H}}
&=\lambda \int_{\{|u_\lambda|>M\}}u_\lambda e^{u_\lambda^2}dV_{\mathbb{H}}+\lambda \int_{\{|u_\lambda|\leq M\}}u_\lambda e^{u_\lambda^2}dV_{\mathbb{H}}\\
&=I_1+I_2.\\
\end{split}\end{equation}
For $I_1$, through $\lambda\int_{B_\mathbb{H}(0,R)}u_{\lambda}^2e^{u_\lambda^2}dV_{\mathbb{H}}\lesssim 1$, we obtain
\begin{equation}\begin{split}
I_1\leq \frac{\lambda }{M}\int_{\mathbb{H}(0,R)}u_{\lambda}^2e^{u_\lambda^2}dV_{\mathbb{H}}\lesssim\frac{1}{M^2},
\end{split}\end{equation}
which implies $\lim\limits_{M\rightarrow \infty}\lim\limits_{\lambda\rightarrow \lambda_0}I_1=0$.
For $I_2$, through Lebesgue dominated convergence theorem, we can derive that $\lim\limits_{M\rightarrow \infty}\lim\limits_{\lambda\rightarrow \lambda_0}I_2=\lambda_0\int_{\mathbb{H}(0,R)}u_0e^{u_0^2}dV_{\mathbb{H}}$, which together with the estimate of $I_1$ gives
\begin{equation}\label{convergence}
\lim\limits_{\lambda\rightarrow \lambda_0}\lambda\int_{B_\mathbb{H}(0,R)}u_\lambda e^{u_\lambda^2}dV_{\mathbb{H}}=\lambda_0\int_{B_\mathbb{H}(0,R)}u_0\exp(u_0^2)dV_{\mathbb{H}}.
\end{equation}
Similarly, we can also prove that \begin{equation}\label{adqe}\lim\limits_{\lambda\rightarrow \lambda_0}\lambda \int_{B_\mathbb{H}(0,R)}e^{u_\lambda^2}dV_{\mathbb{H}}=\lambda_0\int_{B_\mathbb{H}(0,R)}e^{u_0^2}dV_{\mathbb{H}}.\end{equation}
Now, we claim that $u_0\neq 0$. We argue this by contradiction. If $u_0=0$, then from equality \eqref{convergence}, we see that
$\lim\limits_{\lambda\rightarrow\lambda_0}\int_{B_\mathbb{H}(0,R)}|\nabla_{\mathbb{H}}u_\lambda|^2dV_{\mathbb{H}}=2\lim\limits_{\lambda\rightarrow\lambda_0}I_\lambda(u_\lambda)\leq 2\rho_1<4\pi$. Using the Trudinger-Moser inequality and Vitali convergence theorem, we derive that
$$\lim\limits_{\lambda\rightarrow\lambda_0}\lambda\int_{B_\mathbb{H}(0,R)}u_\lambda^2e^{u_{\lambda}^2}dV_{\mathbb{H}}=\lambda_0\int_{B_\mathbb{H}(0,R)}u_0^2e^{u_0^2}dV_{\mathbb{H}}=0,$$ which implies that
$\lim\limits_{\lambda\rightarrow\lambda_0}I_\lambda(u_\lambda)=0$. This arrives a contradiction with $$0<\rho_1\leq \lim\limits_{\lambda\rightarrow\lambda_0}I_\lambda(u_\lambda).$$
So $u_0\neq 0$.

Next, we will prove that $u_\lambda\rightarrow u_0$ in $W^{1,2}_0(B_\mathbb{H}(0,R))$.
According to the convexity of norm in $W^{1,2}_0(B_\mathbb{H}(0,R))$, we only need to prove that $$\lim\limits_{\lambda\rightarrow \lambda_0}\int_{B_\mathbb{H}(0,R)}|\nabla_{\mathbb{H}}u_\lambda|^2dV_{\mathbb{H}}>\int_{B_\mathbb{H}(0,R)}|\nabla_{\mathbb{H}}u_0|^2dV_{\mathbb{H}}$$ is impossible. We argue this by contradiction. Set
$$ v_\lambda:=\frac{u_\lambda}{\lim\limits_{\lambda\rightarrow\lambda_0}\|\nabla_{\mathbb{H}} u_\lambda\|_{L^2(B_\mathbb{H}(0,R))}^2}\ \mbox{and}\ v_0:=\frac{u_0}{\lim\limits_{\lambda\rightarrow\lambda_0}\|\nabla_{\mathbb{H}} u_\lambda\|_{L^2(B_\mathbb{H}(0,R))}^2}.$$
 We claim that there exists $q_0>1$ sufficiently close to $1$ such that
 \begin{equation}\label{d.7}
  q_0\lim\limits_{\lambda\rightarrow\lambda_0}\|\nabla_{\mathbb{H}} u_\lambda\|_{L^2(B_\mathbb{H}(0,R))}^2<\frac{4\pi}{1-\|\nabla_{\mathbb{H}} v_0\|_{L^2(B_\mathbb{H}(0,R))}^2}.
  \end{equation}
 Indeed, by \eqref{adqe} and \eqref{rang}, we have
  \begin{equation}\begin{split}\label{d.8}
  &\lim\limits_{\lambda\rightarrow\lambda_0}\|\nabla_{\mathbb{H}} u_\lambda\|_{L^2(B_\mathbb{H}(0,R))}^2\big(1-\|\nabla_{\mathbb{H}} v_0\|_{L^2}^2\big)\\
  &\ \ =\lim\limits_{\lambda\rightarrow\lambda_0}\|\nabla_{\mathbb{H}} u_\lambda\|_{L^2(B_\mathbb{H}(0,R))}^2\Big(1-\frac{\|\nabla_{\mathbb{H}} u_{0}\|_{L^2(B_\mathbb{H}(0,R))}^2}{\|\nabla_{\mathbb{H}} u_\lambda\|_{L^2(B_\mathbb{H}(0,R))}^2}\Big)\\
 &\ \ =2\lim\limits_{\lambda\rightarrow \lambda_0}I_{\lambda}(u_\lambda)+\lambda\int_{B_\mathbb{H}(0,R))}(e^{u_\lambda^2}-1)dV_{\mathbb{H}}-2I_{\lambda_0}(u_{0})-\lambda_0\int_{B_\mathbb{H}(0,R)}(e^{u_0^2}-1)dV_{\mathbb{H}}\\
&\ \ <4\pi,
\end{split}\end{equation}
and then the claim is proved.  Through the concentration-compactness principle for the Trudinger-Moser inequality \cite{LLZ}, one can derive that there exists $p_0>1$ such that
\begin{eqnarray}\label{d.9}
\sup_{\lambda}\int_{B_\mathbb{H}(0,R)}\big(u_\lambda^2e^{u_\lambda^2})^{p_0}dV_{\mathbb{H}}<\infty.
\end{eqnarray}
Then it follows from  the Vitali convergence theorem that $$\lim\limits_{\lambda\rightarrow\lambda_0}\lambda\int_{B_\mathbb{H}(0,R)}u_\lambda^2\exp(u_\lambda^2)dV_{\mathbb{H}}=\lambda_0\int_{B_\mathbb{H}(0,R)}u_0^2\exp(u_0^2)dV_{\mathbb{H}}.$$
Hence, we conclude that $u_\lambda\rightarrow u_0$ in $W^{1,2}_0(B_\mathbb{H}(0,R))$ from \eqref{addd3}.
Using the Trudinger-Moser inequality in $W^{1,2}_0(B_\mathbb{H}(0,R))$, we derive that
for any $p>1$, there holds $$\int_{B_\mathbb{H}(0,R)}\big(u_\lambda^2\exp(u_\lambda^2)\big)^pdV_{\mathbb{H}}\lesssim 1.$$ Since $u_\lambda$ satisfies elliptic equation \eqref{adeq3}, standard elliptic estimate gives $u_\lambda\rightarrow u_0$ in $C^2(B_\mathbb{H}(0,R))$. Then the proof of (ii) of Theorem \ref{adthm2} is accomplished.
\medskip

\emph{The proof of (iii) of Theorem \ref{adthm2}:} We will prove that if $\lambda_0=\lambda_1(B_\mathbb{H}(0,R))$, then $u_\lambda\rightarrow 0$ in $C^{2}(B_\mathbb{H}(0,R))$. We first show that $\int_{B_\mathbb{H}(0,R)}|\nabla_{\mathbb{H}} u_\lambda|^2dV_{\mathbb{H}}$ is bounded. We argue this by contradiction. Assume that $\lim\limits_{\lambda\rightarrow \lambda_0}|\nabla u_\lambda|^2dV_{\mathbb{H}}=+\infty$, then it follows from $I_{\lambda}'(u_\lambda)u_\lambda=0$ and $I_\lambda(u_\lambda)<2\pi$ that
$$\lim\limits_{\lambda\rightarrow \lambda_0} \lambda \int_{B_\mathbb{H}(0,R)}u_\lambda^2e^{u_\lambda^2}dV_{\mathbb{H}}=+\infty,\ \ \lim\limits_{\lambda\rightarrow \lambda_0} \lambda \int_{B_\mathbb{H}(0,R)}e^{u_\lambda^2}dV_{\mathbb{H}}=+\infty.$$
On the other hand, we can also derive that
$$\lim\limits_{\lambda\rightarrow\lambda_0}\lambda \int_{B_\mathbb{H}(0,R)}\big(u_\lambda^2e^{u_\lambda^2}-(e^{u_\lambda^2}-1)\big)dV_{\mathbb{H}}=\lim\limits_{\lambda\rightarrow\lambda_0}2I_\lambda(u_\lambda)<4\pi,$$
which implies that $\int_{B_\mathbb{H}(0,R)}\big(e^{u_\lambda^2}-1-u_\lambda^2\big)dV_{\mathbb{H}}$ is bounded, hence $\int_{B_\mathbb{H}(0,R)}\big(e^{u_\lambda^2}-1\big)dV_{\mathbb{H}}$ is bounded since $\|u_\lambda\|_{L^2(B_\mathbb{H}(0,R))}\lesssim \|u_\lambda\|_{L^4(B_\mathbb{H}(0,R))}$. This arrives at a contradiction with the fact $$\lim\limits_{\lambda\rightarrow\lambda_0} \lambda \int_{B_\mathbb{H}(0,R)}\big(e^{u_\lambda^2}-1\big)dV_{\mathbb{H}}=+\infty.$$  This proves that $\int_{B_\mathbb{H}(0,R)}|\nabla_{\mathbb{H}} u_\lambda|^2dV_{\mathbb{H}}$ is bounded. Then there exists some non-negative function $u_0\in W^{1,2}_0(B_\mathbb{H}(0,R))$ such that $u_\lambda\rightharpoonup u_0$ weakly in $W^{1,2}_0(B_\mathbb{H}(0,R))$. As what we did in the previous proof of (ii), we can similarly derive that $$\lim\limits_{\lambda\rightarrow \lambda_0}\lambda\int_{B_\mathbb{H}(0,R)}u_\lambda\exp(u_\lambda^2)dV_{\mathbb{H}}=\lambda_0\int_{B_\mathbb{H}(0,R)}u_0\exp(u_0^2)dV_{\mathbb{H}}$$
and $$\lim\limits_{\lambda\rightarrow\lambda_0}\lambda\int_{B_\mathbb{H}(0,R)}e^{u_\lambda^2}dV_{\mathbb{H}}=\lambda_0\int_{B_\mathbb{H}(0,R)}e^{u_0^2}dV_{\mathbb{H}}.$$
Through equation \eqref{adeq3} and the definition of weak solution, we see that $u_0$ satisfies equation
\begin{equation}\label{limit-equation}
\begin{cases}
-\Delta_{\mathbb{H}} u=\lambda_0 ue^{u^2},\quad\quad & x\in B_1, \\
u> 0,\ & x\in B_1,\\
u=0,\quad\quad &x\in \partial B_1.
\end{cases}
\end{equation}
Noticing $\lambda_0=\lambda_1(B_\mathbb{H}(0,R))$ is the first eigenvalue of $-\Delta_{\mathbb{H}}$ in $B_\mathbb{H}(0,R)$ with the Dirichlet boundary, hence one can easily obtain $u_0=0$ through Pohozaev identity. This deduces that $$\lim\limits_{\lambda\rightarrow\lambda_0}\lambda\int_{B_\mathbb{H}(0,R)}\big(\exp(u_\lambda^2)-1)dV_{\mathbb{H}}=\lambda_0\int_{B_\mathbb{H}(0,R)}\big(\exp(u_\lambda^2)-1)dV_{\mathbb{H}}=0.$$
Hence it follows that $\lim\limits_{\lambda\rightarrow\lambda_0}\int_{B_\mathbb{H}(0,R)}|\nabla_{\mathbb{H}} u_\lambda|^2dV_{\mathbb{H}}=2\lim\limits_{\lambda\rightarrow\lambda_0}I_\lambda(u_\lambda)<4\pi$. Combining this and the Trudinger-Moser inequality, we find that there exists some $p_0>1$ such that $\int_{B_\mathbb{H}(0,R)}\big(u_\lambda^2e^{u_\lambda^2}\big)^{p_0}dV_{\mathbb{H}}$ is bounded. Using the Vitali convergence theorem, we derive that $$\lim\limits_{\lambda\rightarrow \lambda_0}\lambda\int_{B_\mathbb{H}(0,R)}u_\lambda^2\exp(u_\lambda^2)dV_{\mathbb{H}}=\lambda_0\int_{B_\mathbb{H}(0,R)}u_0^2\exp(u_0^2)dV_{\mathbb{H}}=0,$$ which implies that
\begin{equation}\label{lim2}\lim\limits_{\lambda\rightarrow\lambda_1}\int_{B_\mathbb{H}(0,R)}|\nabla_{\mathbb{H}} u_\lambda|^2dV_{\mathbb{H}}=0 \text{ and}\lim\limits_{\lambda\rightarrow\lambda_1} I_{\lambda}(u_\lambda)=0.\end{equation} That is to say $u_\lambda\rightarrow 0$ in $W^{1,2}_0(B_\mathbb{H}(0,R))$. Using the Trudinger-Moser inequality again, we derive that
for any $p>1$, there holds $\int_{B_\mathbb{H}(0,R)}\big(u_\lambda^2\exp(u_\lambda^2)\big)^pdV_{\mathbb{H}}\lesssim 1$. Since $u_\lambda$ satisfies elliptic equation \eqref{elliptic}, standard elliptic estimate gives that $u_\lambda\rightarrow u_0=0$ in $C^2(B_\mathbb{H}(0,R))$. Then the proof of (iii) of Theorem \ref{adthm2} is accomplished.
\section{The proof of Theorem \ref{adthm3}}
In this section, we will prove the multiplicity and non-existence result for the Trudinger-Moser functional $$F(u)=\int_{B_\mathbb{H}(0,R)}(e^{u^2}-1)dV_{\mathbb{H}}$$ under the constraint $\int_{B_\mathbb{H}(0,R)}|\nabla_{\mathbb{H}}u_\lambda|^2dV_{\mathbb{H}}=\gamma$ for $\gamma>4\pi$, namely we shall give the proof of Theorem \ref{adthm3}.
\medskip

Obviously, the positive critical points $u_0$ of the Trudinger-Moser functional $F(u)$ under the constraint $\int_{B_\mathbb{H}(0,R)}|\nabla_{\mathbb{H}}u_\lambda|^2dV_{\mathbb{H}}=\gamma$ must satisfy
\begin{equation}\label{eq1}
\begin{cases}
-\Delta_{\mathbb{H}} u =\lambda_0 ue^{u^2}\quad\quad & x\in B_\mathbb{H}(0,R), \\
u\geq 0,\ & x\in B_\mathbb{H}(0,R),\\
u=0, \quad\quad &x\in \partial B_\mathbb{H}(0,R),\\
\int_{B_\mathbb{H}(0,R)}|\nabla_{\mathbb{H}}u|^2dV_{\mathbb{H}}=\gamma,
\end{cases}
\end{equation}
where $\lambda_0\in (0, \lambda_1(B_\mathbb{H}(0,R)))$. Set $\Lambda_{\lambda}=\int_{B_\mathbb{H}(0,R)}|\nabla_{\mathbb{H}}u_\lambda|^2dV_{\mathbb{H}}$, where $u_\lambda$ is the positive solution of equation
\begin{equation}\label{eq2}
\begin{cases}
-\Delta_{\mathbb{H}} u_\lambda=\lambda u_\lambda e^{u_\lambda^2}\quad\quad & x\in B_\mathbb{H}(0,R), \\
u_\lambda\geq 0,\ & x\in B_\mathbb{H}(0,R),\\
u_\lambda=0, \quad\quad &x\in \partial B_\mathbb{H}(0,R).\\
\end{cases}
\end{equation}
The definition of $\Lambda_{\lambda}$ is well-defined because the positive solution of \eqref{eq2} is unique. Through Theorem \ref{adthm2}, we see that $\Lambda_{\lambda}$ is continuous with the respect to the parameter $\lambda \in (0, \lambda_1(B_\mathbb{H}(0,R)))$ and
$$\lim_{\lambda\rightarrow 0}\int_{B_\mathbb{H}(0,R)}|\nabla_{\mathbb{H}}u_\lambda|^2dV_{\mathbb{H}}=4\pi,\ \ \lim\limits_{\lambda\rightarrow \lambda_1(B_\mathbb{H}(0,R))}\int_{B_\mathbb{H}(0,R)}|\nabla_{\mathbb{H}}u_\lambda|^2dV_{\mathbb{H}}=0.$$ Hence $\Lambda_{\lambda}$ is bounded in $(0,\lambda_1(B_\mathbb{H}(0,R)))$.
Define $\gamma^{*}=\sup \{\Lambda_\lambda : \lambda \in (0,\lambda_1(B_\mathbb{H}(0,R))\}$, we see that for any $\gamma>\gamma^{*}$, Trudinger-Moser functional $F(u)$ under the constraint $\int_{B_\mathbb{H}(0,R)}|\nabla_{\mathbb{H}}u_\lambda|^2dV_{\mathbb{H}}=\gamma$ does not admit any positive critical point if $\gamma>\gamma^{*}$.
\medskip

Now, in order to finish the proof of Theorem \ref{adthm3}, we only need to prove that $F(u)$ under the constraint $\int_{B_\mathbb{H}(0,R)}|\nabla_{\mathbb{H}}u_\lambda|^2dV_{\mathbb{H}}=\gamma$ has at least two positive critical points if $\gamma\in (4\pi,\gamma^{*})$. Since $\Lambda_\lambda$ is continuous with respect to the parameter $\lambda \in (0, \lambda_1(B_\mathbb{H}(0,R)))$ and $\Lambda(0)=4\pi$, $\Lambda(\lambda_1(B_\mathbb{H}(0,R)))=0$, hence it suffices to prove that there exists a $\lambda_{*}\in (0, \lambda_1(B_\mathbb{H}(0,R)))$ such that $\Lambda(\lambda_{*})>4\pi$. This will be easily verified by showing that the Trudinger-Moser functional $F(u)$ under the constraint $\int_{B_\mathbb{H}(0,R)}|\nabla_{\mathbb{H}}u_\lambda|^2dV_{\mathbb{H}}=\gamma$ for $\gamma$ sufficiently close to $4\pi$ has a local maximum point.  The argument is essentially similar to the one for the local maximum point of the super-critical Trudinger-Moser functional on bounded domain of $\mathbb{R}^2$, which was proved by  Struwe in \cite{ST}. For simplicity, we only give the outline of the proof.
\vskip0.1cm

Step 1: Set $$\beta_{4\pi}^{*}=\sup_{\int_{B_\mathbb{H}(0,R)}|\nabla_{\mathbb{H}} u_\lambda|^2dV_{\mathbb{H}}=1}\int_{B_\mathbb{H}(0,R)}(e^{4\pi u^2}-1)dV_{\mathbb{H}},$$
then the set $$K_{4\pi}=\{u\in W_0^{1,2}(B_\mathbb{H}(0,R)):\,  \int_{B_\mathbb{H}(0,R)}|\nabla_{\mathbb{H}} u|^2dV_{\mathbb{H}}=1, \, \int_{B_\mathbb{H}(0,R)}e^{4\pi u^2}dV_{\mathbb{H}}=\beta_{4\pi}^{*} \}$$ is compact. The proof of compactness is essential to the proof of existence of extremal functional for critical Trudinger-Moser functional on two dimensional compact manifold which  is established by Y. X. Li \cite{Li, Li1}.
\medskip

Step 2: Let $\Sigma$ be the set consisting of all functions $u\in W^{1,2}_0(B_\mathbb{H}(0,R))$ satisfying $\int_{B_\mathbb{H}(0,R)}|\nabla_{\mathbb{H}} u_\lambda|^2dV_{\mathbb{H}}=1$ and
define the Dirichlet  norm neighborhoods of $K_{4\pi}$ in $\Sigma$ by
$$N_{\epsilon}=\{u\in \Sigma|\  \exists v\in K_{4\pi}\ \  s.t. \ \int_{B_\mathbb{H}(0,R)}|\nabla_{\mathbb{H}} (u-v)|^2dV_{\mathbb{H}}<\epsilon\}.$$
Similarly, we can show that for sufficiently small $\epsilon>0$, there holds
\begin{equation}
\sup_{u\in N_{2\epsilon}\setminus N_{\epsilon}}\int_{B_\mathbb{H}(0,R)}(e^{4\pi u^2}-1)dV_{\mathbb{H}}<\beta_{4\pi}^{*}=\sup_{u\in N_{\epsilon}}\int_{B_\mathbb{H}(0,R)}(e^{4\pi u^2}-1)dV_{\mathbb{H}}.
\end{equation}
\medskip

Step 3: Through compactness of $K_{4\pi}$ and uniformly local continuity of $F$, we can show that there exists $\alpha^{*}>4\pi$ and $\varepsilon>0$ such that for any $\alpha\in [4\pi, \alpha^{*})$, there holds
$$\sup_{u\in N_{2\epsilon}\setminus N_{\epsilon}}\int_{B_\mathbb{H}(0,R)}(e^{\alpha u^2}-1)dV_{\mathbb{H}}<\sup_{u\in N_{\epsilon}}\int_{B_\mathbb{H}(0,R)}(e^{\alpha u^2}-1)dV_{\mathbb{H}}=:\beta^{*}_{\alpha}$$

Combining Steps 1-3, one can easily obtain that Trudinger-Moser functional $F(u)=\int_{B_\mathbb{H}(0,R)}(e^{u^2}-1)dV_{\mathbb{H}}$ under the constraint $\int_{B_\mathbb{H}(0,R)}|\nabla_{\mathbb{H}}u_\lambda|^2dV_{\mathbb{H}}=\gamma$ for $\gamma$ sufficiently close to $4\pi$ has a local maximum point. This accomplishes the proof of Theorem \ref{adthm3}.

\section{The proof of Theorem \ref{thm5}}
Obviously, the positive critical points $u_0$ of the perturbed Trudinger-Moser functional $\tilde{H}(u)=\int_{B_1}(e^{u^2}-1-u^2)dx$ under the constraint $\int_{B_1}|\nabla u|^2dx=\beta$ must satisfy
\begin{equation}\label{eq1}
\begin{cases}
-\Delta u =\lambda_0 u(e^{u^2}-1),\quad\quad & x\in B_1 \\
u>0,\ & x\in B_1\\
u=0,\quad\quad &x\in \partial B_1\\
\int_{B_1}|\nabla u|^2dx=\beta.
\end{cases}
\end{equation}
Through the moving-plane method, it is easy to check that $u_0$ is radially decreasing. Then $u_0$ satisfies the following ODE equation
\begin{equation}\label{ode1}
		\begin{cases}
			-(ru')'=r\lambda_0 u (e^{u_\lambda^2}-1),\quad\quad & r\in(0,1), \\
			u>0,\quad\quad & r\in(0,1), \\
			u'(0)=0, u(1)=0.
		\end{cases}
	\end{equation}
Using the argument of Theorem \ref{thm1} again, we can deduce that $u_0$ is the least energy critical point of the functional
$$I_{\lambda_0}(u)=\frac{1}{2}\int_{B_1}|\nabla u|^2dx-\frac{\lambda_0}{2}\int_{B_1}(e^{u^2}-1-u^2)dx.$$ By using the Trudinger-Moser inequality and Nehari manifold method (see the Appendix), one can deduce that $I_{\lambda_0}(u_0)<2\pi$. Using $I_{\lambda_0}'(u_0)u_0=0$, we obtain
\begin{equation}\begin{split}
I_{\lambda_0}(u_0)&=\frac{\lambda_0}{2}\int_{B_1}u_0^2(e^{u_0^2}-1)dx-\frac{\lambda_0}{2}\int_{B_1}(e^{u_0^2}-1-u_0^2)dx\\
& \geq \frac{\lambda_0}{4}\int_{B_1}u_0^2(e^{u_0^2}-1)dx=\frac{1}{4}\int_{B_1}|\nabla u_0|^2dx.
\end{split}\end{equation}
This together with $I(u_0)<2\pi$ yields $\int_{B_1}|\nabla u_0|^2dx<8\pi$. Hence  the perturbed super-critical Trudinger-Moser functional $\tilde{H}(u)=\int_{B_1}(e^{u^2}-1-u^2)dx$ under the constraint $\int_{B_1}|\nabla u|^2dx=\beta$ does not admit any positive critical point if $\beta\geq 8\pi$.

\section{Appendix}
In this section, we will give the proof of functional energy $I_{\lambda}(u_\lambda)<2\pi$, where $u_\lambda$ is the least energy critical point of functional $$I_{\lambda}(u)=\frac{1}{2}\int_{B_\mathbb{H}(0,R)}|\nabla_{\mathbb{H}}u|^2dV_{\mathbb{H}}-
\frac{\lambda}{2}\int_{B_\mathbb{H}(0,R)}(e^{u^2}-1)dV_{\mathbb{H}}.$$
Recalling the definition of the least energy critical point of functional $I_{\lambda}(u)$, we know that $I_{\lambda}(u_\lambda)=m_\lambda:=\min\{I_{\lambda}(u):\ I_\lambda'(u)u=0\}$.
\vskip0.2cm

We first show that $m_{\lambda}>0$. We argue this by contradiction. Assume that $m_{\lambda}=0$, then exists a sequence $\{u_k\}_k\in W^{1,2}_0(B_\mathbb{H}(0,R))$ such that
\begin{equation*}\int_{B_\mathbb{H}(0,R)}|\nabla_{\mathbb{H}} u_k|^2dV_{\mathbb{H}}-\lambda\int_{B_\mathbb{H}(0,R)}u_k^2 e^{u_k^2}dV_{\mathbb{H}}=0,\ \forall k\in\mathbb{\mathbb{N}}\end{equation*}
and
\begin{equation*}\lim\limits_{k\rightarrow \infty}\left(\frac{1}{2}\int_{B_\mathbb{H}(0,R)}|\nabla_{\mathbb{H}} u_k|^2dV_{\mathbb{H}}-\frac{\lambda}{2}\int_{B_\mathbb{H}(0,R)}\big(e^{u_k^2}-1\big)dV_{\mathbb{H}}\right)=0.\end{equation*}
Direct computations show that
\begin{equation}\begin{split}
m_{\lambda}&=\lim\limits_{k\rightarrow \infty}\left(\frac{1}{2}\int_{B_\mathbb{H}(0,R)}|\nabla_{\mathbb{H}} u_k|^2dV_{\mathbb{H}}-\frac{\lambda}{2}\int_{B_\mathbb{H}(0,R)}(e^{u_k^2}-1)dV_{\mathbb{H}}\right)\\
&=\lim\limits_{k\rightarrow \infty}\left(\frac{\lambda}{2}\int_{B_\mathbb{H}(0,R)}u_k^2 e^{u_k^2}dV_{\mathbb{H}}-\frac{\lambda}{2}\int_{B_\mathbb{H}(0,R)}(e^{u_k^2}-1)dV_{\mathbb{H}}\right)\\
&\geq \frac{\lambda}{4}\lim\limits_{k\rightarrow \infty}\int_{B_\mathbb{H}(0,R)}u_k^2(e^{u_k^2}-1)dV_{\mathbb{H}}\\
&=\frac{1}{4}\lim\limits_{k\rightarrow \infty}\int_{B_\mathbb{H}(0,R)}\big(|\nabla_{\mathbb{H}} u_k|^2-\lambda |u_k|^2\big)dV_{\mathbb{H}}.
\end{split}\end{equation}
Since $\lambda<\lambda_1(B_\mathbb{H}(0,R))$, it follows from the Sobolev imbedding theorem that
$$u_k\rightarrow 0\ {\rm  in}\ W^{1,2}_0(B_\mathbb{H}(0,R))\ \ {\rm and}\ \ u_k\rightarrow 0\ \ {\rm in}\ L^{p}(B_\mathbb{H}(0,R))\ \ {\rm for\ any }\ p\geq 1.$$
Let $v_k=\frac{u_k}{\|\nabla_{\mathbb{H}} u_k\|_2}$, then $v_k\rightharpoonup v$ in $W^{1,2}_{0}(B_\mathbb{H}(0,R))$ with $\|v\|_2^2\leq \lim\limits\|v_k\|_2^2<\frac{1}{\lambda_1(B_\mathbb{H}(0,R))}$.

Since $u_k\rightarrow 0\ {\rm  in}\ W^{1,2}_0(B_\mathbb{H}(0,R))$, by Trudinger-Moser inequality in $W^{1,2}_{0}(B_\mathbb{H}(0,R))$, we have
$e^{u_k^2}\in L^{p}(B_\mathbb{H}(0,R))$ for any $p>1$. Then it follows from the Vitali convergence theorem that

\begin{equation}\begin{split}
1&=\lim\limits_{k\rightarrow +\infty}\int_{B_\mathbb{H}(0,R)}\frac{\lambda u_k^2}{\|\nabla_{\mathbb{H}} u_k\|_2^2}e^{u_k^2}dV_{\mathbb{H}}\\
&=\lim\limits_{k\rightarrow +\infty}\lambda\int_{B_\mathbb{H}(0,R)}e^{u_k^2}|v_k|^2dx\\
&=\lambda\|v\|_2^2\leq \frac{\lambda}{\lambda_1(B_\mathbb{H}(0,R))}<1
\end{split}\end{equation}
which is a contradiction.

Next, we start to prove that $m_{\lambda}<2\pi$. Let $w\in W^{1,2}_0(B_\mathbb{H}(0,R))$ such that $\| \nabla_{\mathbb{H}} w\|_2^2-\lambda\|w\|_2^2=1$.
Then there exists some $\gamma_{w}>0$ such that $$\int_{B_\mathbb{H}(0,R)}(|\nabla _{\mathbb{H}}\gamma_{w} w|^2-\lambda|\gamma_{w} w|^2)dV_{\mathbb{H}}-\lambda\int_{B_\mathbb{H}(0,R)}(\gamma_{w} w)^2\big(e^{(\gamma_{w} w)^2}-1\big)dV_{\mathbb{H}}=0,$$
which implies that
\begin{equation}\begin{split}
m_{\lambda}&\leq \frac{1}{2}\int_{B_\mathbb{H}(0,R)}\big(|\nabla_{\mathbb{H}}\gamma_{w} w|^2-\lambda|\gamma_{w} w|^2\big)dV_{\mathbb{H}}
-\frac{\lambda}{2}\int_{B_\mathbb{H}(0,R)}\big(e^{(\gamma_{w} w)^2}-1-(\gamma_{w} w)^2\big)dV_{\mathbb{H}}\\
&<\frac{\gamma_{w}^2}{2}\int_{B_\mathbb{H}(0,R)}\big(|\nabla_{\mathbb{H}} w|^2-\lambda|w|^2\big)dV_{\mathbb{H}}=\frac{\gamma_{w}^2}{2}.
\end{split}\end{equation}
Set $m_{\lambda}=\frac{\gamma_{\infty}^2}{2}$. Since $\big(e^{(\gamma w)^2}-1\big)w^2$ is monotone increasing about the variable $\gamma$, we derive that
\begin{equation}\begin{split}
\int_{B_\mathbb{H}(0,R)}\big(e^{(\gamma_{\infty} w)^2}-1\big)w^2dV_{\mathbb{H}}&\leq \int_{B_\mathbb{H}(0,R)}\big(e^{(\gamma_{w} w)^2}-1\big)w^2dV_{\mathbb{H}}\\
&=\int_{B_\mathbb{H}(0,R)}(|\nabla_{\mathbb{H}}w|^2-\lambda|w|^2)dV_{\mathbb{H}}=1,
\end{split}\end{equation}
which implies that $$\sup_{\int_{B_\mathbb{H}(0,R)}(|\nabla_{\mathbb{H}}  w|^2-\lambda|w|^2)dV_{\mathbb{H}}=1}\int_{B_\mathbb{H}(0,R)}\big(e^{(\gamma_{\infty} w)^2}-1\big)w^2dV_{\mathbb{H}}<\infty.$$
Noticing $$\int_{B_{\tilde{R}}(0)}|\nabla w|^2dx=\int_{B_\mathbb{H}(0,R)}|\nabla_{\mathbb{H}}  w|^2dV_{\mathbb{H}},$$
and $dV_{\mathbb{H}}=\big(\frac{2}{1-|x|^2}\big)^2dx$, we obtain that
$$\sup_{\int_{B_{\tilde{R}}(0)}|\nabla w|^2dx=1}\int_{B_{\tilde{R}}(0)}\big(e^{(\gamma_{\infty} w)^2}-1\big)w^2dx<\infty,$$
where $B_{\tilde{R}}(0)$ denotes the ball with radius $\tilde{R}$ equal to $\frac{e^{R}-1}{e^{R}+1}$ in $\mathbb{R}^2$.
Then one can construct well-known Moser sequence (that is a concentration sequence which blows up at some point) to deduce that $m_{\lambda}=\frac{\gamma_{\infty}^2}{2}<2\pi$.

\end{document}